\newcommand{\version}{\today}
\newcommand{\wv}{\beginpicture
    \setcoordinatesystem units <0.8 mm,0.8 mm>
    \setplotarea x from  -1 to 1, y from  -1 to 1
    \put{\textcolor{white}{\circle*{4}}} [B1] at 2.0 0
    \put{\textcolor{black}{\circle{5}}} [B1] at 2.2 0
    \endpicture}
\newtheorem{theorem}{Theorem}[section]
\newtheorem{lemma}[theorem]{Lemma}
\newtheorem{conjecture}[theorem]{Conjecture}
\newtheorem{observ}[theorem]{Observation}
\newcommand{\claw}{K_{1,3}}
\newcommand{\mf}{\mathcal{F}}
\begin{document}

\footnotetext[1]{{Institut f{\"u}r Diskrete Mathematik und Algebra,
  Technische Universit{\"a}t Bergakademie Freiberg, 09596 Freiberg, Germany}; \\ email: {\tt brause@math.tu-freiberg.de, Ingo.Schiermeyer@tu-freiberg.de }}
\footnotetext[2]{Department of Mathematics, University of
West Bohemia;
%Centre of Excellence ITI - Institute for Theoretical
%Computer Science, Charles University;
European Centre of Excellence NTIS -
New Technologies for the Information Society;
P.O. Box 314, 306 14
Pilsen, Czech Republic; \\ email: {\tt $\{$holubpre,vranap,ryjacek$\}$@kma.zcu.cz, kabela@ntis.zcu.cz}}

%\title{On perfect graphs and induced subgraphs in $\claw$-free graphs
%}

\title{On forbidden induced subgraphs for $\claw$-free perfect graphs\footnote{Research partly supported by the DAAD-PPP project
``Colourings and connection in graphs" with project-ID 57210296 (German) and 7AMB16DE001 (Czech), respectively.}
}

\author{Christoph Brause$^{1}$ \and P\v remysl Holub$^{2,}$\footnote{Research partly supported by project P202/12/G061 of the
Czech Science Foundation.} \and Adam Kabela$^{2,\dagger}$ \and Zden\v{e}k Ryj{\'a}\v{c}ek$^{2,\dagger}$  \and Ingo Schiermeyer$^{1}$ \and Petr Vr{\'a}na$^{2,\dagger}$ \\}

\date{\version}

\maketitle

\begin{abstract}
Considering connected
$\claw$-free graphs with
independence number at least $3$,
Chudnovsky and Seymour (2010) 
%[J. Combin. Theory B (2010) 560--572] 
showed that every such graph, say $G$, is $2\omega$-colourable
where $\omega$ denotes the clique number of $G$.
We study $(\claw,Y)$-free graphs, 
and show that the following three statements are equivalent.
\begin{enumerate}
\item[$(1)$] Every connected $(\claw,Y)$-free graph which is distinct from an odd cycle and which has independence number at least $3$
     is perfect.
\item[$(2)$] Every connected $(\claw,Y)$-free graph which is distinct from an odd cycle and which has independence number at least $3$
     is $\omega$-colourable.
\item[$(3)$] $Y$ is isomorphic %to $P_5$ or $Z_2$ or 
	to an induced subgraph of $P_5$ or $Z_2$ (where $Z_2$ is also known as hammer).
\end{enumerate} 
Furthermore, for connected $(\claw,Y)$-free graphs
(without an assumption on the independence number), we show a similar characterisation 
featuring the graphs $P_4$ and $Z_1$ (where $Z_1$ is also known as paw).

\medskip
\noindent {\bf Keywords:} perfect graphs, vertex colouring, forbidden induced subgraphs
\smallskip

\noindent {\bf AMS Subject Classification: 05C15, 05C17}

\end{abstract}

\section{Introduction}
We consider finite, simple, and undirected graphs but do not distinguish between isomorphic graphs. 
For terminology and notation not defined here, we refer the reader to~\cite{BM08}. 
 
We recall that an {\it induced subgraph} of a graph $G$
is a graph on a vertex set $S \subseteq V(G)$
for which two vertices are adjacent
if and only if these two are adjacent in $G$;
in particular, we say that this subgraph is \emph{induced by $S$}.
Furthermore, a graph $H$ is an \emph{induced subgraph} of $G$ if $H$ is isomorphic
to an induced subgraph of $G$.
$G$ is \emph{$(H_1, H_2, \dots, H_k)$-free}
if none of its induced subgraphs is isomorphic to one of $\{H_1, H_2, \dots, H_k\}$.

A graph is {\it $k$-colourable} if its vertices can be coloured with $k$ colours
so that adjacent vertices obtain distinct colours.
The smallest integer $k$ such that a given graph $G$ is $k$-colourable
is called the {\it chromatic number} of $G$, denoted by $\chi(G).$
Clearly, $\chi(G) \geq \omega(G)$ for every graph $G,$
where $\omega(G)$, the so-called \emph{clique number} of $G$, denotes the order of a maximum complete subgraph of $G.$
For brevity, we shall say that a graph $G$ is \emph{$\omega$-colourable} if $\chi(G) = \omega(G)$.
Furthermore, $G$ is {\it perfect} if $\chi(G')=\omega(G')$
for every induced subgraph $G'$ of $G.$
%
%We say a function $f = f(\omega(G))$
%is a {\it binding function} for a family $\cal{G}$ of graphs
%if $\chi(G) \leq f$ for every graph $G$
%such that $G$ is an induced subgraph of a graph from $\cal{G}.$ 
%%
%In particular, the maximum class of graphs of binding function
%$f = \omega(G)$ is precisely the class of all perfect graphs.

Considering a relation between colourings and forbidden induced subgraphs,
Gy\'arf\'as and, independently, Sumner conjectured the following:

\begin{conjecture}[Gy\'arf\'as-Sumner Conjecture~\cite{G},\cite{S}]\label{conj_G}
For every forest $F$, there is a function $f_F\colon \mathbb{N}\to \mathbb{N}$
such that if $G$ is an $F$-free graph, then $G$ is $f_F(\omega(G))$-colourable.
\end{conjecture}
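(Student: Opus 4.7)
The statement is a well-known open conjecture, so the honest task is to sketch the natural line of attack rather than claim a complete proof. The plan is to argue by structural induction on the forest $F$, reducing first to the case of trees and then attacking trees by induction on their size. For the first reduction, if $F$ has components $T_1,\dots,T_k$, it is enough to handle each tree $T_i$: given bounds $\chi(G)\le f_{T_i}(\omega(G))$ for $T_i$-free graphs, one can combine them via a Ramsey-type argument, or via the standard trick of attaching the components of $F$ to a common new root to obtain a single tree whose freeness implies $F$-freeness up to a bounded overhead. This reduction is classical and would not be the central difficulty.

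For trees, I would start with the two extremal base cases. Stars $K_{1,r}$ are immediate: a $K_{1,r}$-free graph has maximum degree less than $r$, hence chromatic number less than $r$. Paths $P_t$ are handled by Gy\'arf\'as's BFS argument: in a connected $P_t$-free graph the breadth-first-search levels from any vertex number at most $t-1$, each level induces a $P_{t-1}$-free graph, and a careful analysis of the edges between two consecutive levels reduces the problem to a smaller instance, yielding a polynomial $\chi$-bound. The inductive step for a general tree $T$ with a leaf $v$ adjacent to $u$ would try to exploit $(T-v)$-freeness: one partitions $V(G)$ according to a BFS or neighbourhood-structure parameter and argues that each part, if it used too many colours, would allow a copy of $T-v$ to be extended through $u$ into a forbidden $T$. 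Induction on $T-v$ then bounds the chromatic number of each part, and boundedness of the number of parts yields a bound for $G$.

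The main obstacle, and the reason the conjecture remains open after several decades, is that the above inductive scheme breaks down precisely for trees that are simultaneously of large radius and large maximum degree, the prototypical villains being subdivided stars, large caterpillars, and balanced binary trees. For such $T$, the leaf-deletion $T-v$ retains essentially all of the complexity of $T$, so the induction does not actually make progress; and the BFS layering used for paths stops controlling the structure because each layer can itself contain long induced paths and large branching. Overcoming this would require a genuinely new structural insight — for instance, a theorem relating induced-subgraph freeness to some tree-decomposition or vertex-ordering invariant — which is the level at which the current Scott--Seymour programme on $\chi$-boundedness has made partial but not complete progress. I would therefore expect my attempt to succeed for trees of bounded radius (recovering known results by Kierstead--Penrice and others) and to get stuck precisely at subdivided stars of unbounded radius, which is where the frontier of the problem lies.
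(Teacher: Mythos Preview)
Your instinct is correct: the paper does not prove this statement at all. It is stated there as the Gy\'arf\'as--Sumner Conjecture, an open problem, and the paper makes no attempt to resolve it; the conjecture is only mentioned as motivation before the paper turns to its actual results on $(K_{1,3},Y)$-free graphs. So there is no ``paper's own proof'' to compare your proposal against.

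Given that, your write-up is a reasonable and honest survey of the standard approaches and their known limitations. The reduction from forests to trees, the base cases of stars and paths, and the identification of subdivided stars and other trees of large radius and large degree as the sticking point are all accurate. One small correction: your BFS description for $P_t$-free graphs is not quite right---the number of BFS levels is not bounded by $t-1$ in general (a $P_t$-free graph can have arbitrary diameter for $t\ge 4$); Gy\'arf\'as's argument instead shows that along a longest induced path one can iteratively peel off neighbourhoods and apply induction, which is a different mechanism than bounding the number of levels. But since you are not claiming a proof, this does not affect the overall assessment.
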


If the graph $F$ contains a cycle, then
no such function $f_F$ exists since Erd\H{o}s~\cite{E} showed that
there are graphs of arbitrarily large girth and chromatic number.

Considering forbidden induced forests, for example, Gy\'arf\'as proved $\chi(G)\leq R(\omega(G),3)$ for every $K_{1,3}$-free graph $G$; here $R(\omega(G),3)$ denotes the classical Ramsey number, that is, $R(\omega(G),3)$ denotes the least integer $k$ such that each graph of order at least $k$ contains an independent set of size $3$, or an induced subgraph which is complete and of order $\omega(G)$. 
This upper bound follows from Ramsey's Theorem~\cite{Ramsey}
and from the trivial inequality $\chi(G) \leq \Delta(G)+1$, where $\Delta(G)$ denotes the maximum degree of $G$.
However, $R(\omega(G),3)$ is known to be non-linear in $\omega(G)$, cf.\,\cite{K}.
Additionally, every upper bound on the chromatic number in terms of the clique number of a $K_{1,3}$-free graph is non-linear since
if $G$ is a graph with independence number~$2,$
then $\chi(G) \geq \frac{|V(G)|}{2}$ (since every colour can be used at most twice)
but $\omega(G)$ might be at most $9 \sqrt{|V(G)| \cdot \log|V(G)|}$
(by a result of Kim~\cite{K}).
In particular, Brause et al.~\cite{BRSV} showed that even
in the class of $(2K_2, 3K_1)$-free graphs, we cannot bound the chromatic number of such a graph by a linear function depending on its clique number only.
Excluding all graphs with independence number $2,$
Chudnovsky and Seymour~\cite{CS} showed that
every connected $\claw$-free graph $G$ with independence number at least $3$
is $2\omega(G)$-colourable.

In addition, Brause et al., showed the following: 

\begin{theorem}[Brause et al.~\cite{BRSV}]\label{thm_Brause_perfect}
Every connected $(\claw, 2K_2)$-free graph with independence number at least $3$
is perfect.
\end{theorem}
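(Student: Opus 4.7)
The plan is to invoke the Strong Perfect Graph Theorem of Chudnovsky, Robertson, Seymour and Thomas: a graph is perfect if and only if it is Berge, that is, it contains no induced odd cycle $C_{2k+1}$ (odd hole) and no induced complement $\overline{C_{2k+1}}$ (odd antihole) for any $k\ge 2$. It thus suffices to rule out both kinds of obstruction in $G$.

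Long odd holes are immediate: for $k\ge 3$, the vertices $v_1,v_2,v_4,v_5$ of $C_{2k+1}=v_1v_2\cdots v_{2k+1}v_1$ induce $2K_2$, which $G$ does not contain. The remaining potential obstructions are thus an induced $C_5$ and an induced antihole $\overline{C_{2k+1}}$ with $k\ge 3$; in either case let $H$ denote such an offending subgraph. Since $\alpha(\overline{C_{2k+1}})=\omega(C_{2k+1})=2$ and $\alpha(C_5)=2$ while $\alpha(G)\ge 3$, the set $V(G)\setminus V(H)$ is non-empty, and the connectivity of $G$ lets me pick $w\in V(G)\setminus V(H)$ with at least one neighbour in $V(H)$. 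I will then carry out a case analysis on $S:=N(w)\cap V(H)$, exploiting the dihedral symmetry of $H$. In each case the aim is to exhibit either an induced claw, typically centred at some $v\in V(H)$ with three pairwise non-adjacent neighbours drawn from $V(H)\cup\{w\}$, or an induced $2K_2$ formed from an edge $wx$ with $x\in S$ together with an edge of $H$ whose endpoints lie outside $N(w)\cup N_H(x)$.

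For $C_5$ the enumeration reduces to a handful of symmetry classes, and direct inspection closes every non-dominating choice of $S$; when $w$ dominates $V(H)$, the subgraph on $V(H)\cup\{w\}$ still has independence number $2$, so $\alpha(G)\ge 3$ forces a further external vertex $w'$, and the analysis recurses until a non-dominating case appears. For the antihole case I will use the following uniform structural fact: the non-edges of $\overline{C_{2k+1}}$ form a Hamiltonian $(2k+1)$-cycle on $V(H)$, so that any two vertices of $H$ at cyclic index-distance different from $1$ are adjacent, and for every $x\in V(H)$ the subgraph induced by $N_H(x)$ is isomorphic to $\overline{P_{2k-1}}$. This rigidly restricts the admissible adjacency patterns of $w$: to avoid a claw at a vertex $x\in S$, the non-neighbours of $w$ inside $N_H(x)$ must not contain a pair of cyclically consecutive vertices of $H$. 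A further short analysis will then show that these local constraints, combined with $2K_2$-freeness and the existence of an independent triple in $G$, cannot be simultaneously satisfied, yielding the desired contradiction. The main technical obstacle is executing this case analysis cleanly and uniformly in $k$ for the antihole; the Hamiltonian-non-edge-cycle structure of $\overline{C_{2k+1}}$ is what keeps the argument bounded in complexity and essentially independent of $k$.
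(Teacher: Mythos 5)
Your overall strategy (Strong Perfect Graph Theorem, kill long odd holes via an induced $2K_2$, then attack $C_5$ and odd antiholes through an attachment vertex) is sound in outline and is close in spirit to how the paper derives its generalisation, but two steps as described would fail. First, the $C_5$ analysis: your claimed dichotomy that ``direct inspection closes every non-dominating choice of $S$'' is false. Take $w$ with $N(w)\cap V(H)=\{v_1,v_2,v_3,v_4\}$ (inducing $P_4$): this is non-dominating, yet the six-vertex graph $V(H)\cup\{w\}$ is $(\claw,2K_2)$-free and has independence number $2$, so no contradiction arises; the same holds when $w$ is complete to $C_5$ (the graph $F_1^0$ of the paper, which is genuinely imperfect). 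So the only cases that close immediately are $S$ inducing $K_2$ or $P_3$, and your proposed recursion (``add another external vertex until a non-dominating case appears'') has no termination or progress argument -- a priori every added vertex could again attach along a $P_4$ or a $C_5$. The paper avoids this by a different use of $\alpha(G)\ge 3$: it first shows (via claw-freeness) that \emph{every} vertex of $G$ has a neighbour on $C$, then takes an independent triple $I$ and analyses the finite graph induced by $C\cup I$ directly, where no vertex of $C$ can see all of $I$; only three configurations survive and each contains a forbidden subgraph. You should replace the recursion by an argument of this type.

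Second, the odd antihole case is not actually a proof but a promissory note: ``a further short analysis will then show \dots cannot be simultaneously satisfied.'' The local constraints you extract are correct (indeed, $2K_2$-freeness forces the non-neighbours of $w$ on $\overline{C_{2k+1}}$ to contain no two cyclically consecutive vertices, since $u_i\in N(w)$ with $u_{i-1},u_{i+1}\notin N(w)$ yields the induced $2K_2$ on $\{w,u_i,u_{i-1},u_{i+1}\}$), and one can push this to show every vertex of $G$ attaches to the antihole; but the decisive step -- deriving a contradiction from an independent triple, whose members may have large and differently placed non-neighbourhoods on $H$ -- is exactly where the difficulty lives and is omitted. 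What you are reproving here is essentially Ben Rebea's Lemma (a connected claw-free graph with independence number at least $3$ containing an odd antihole contains a $5$-hole), which is a nontrivial result that the paper imports as a black box; either cite it, which reduces the antihole case to the $C_5$ case you already must handle, or carry out the analysis in full. (Minor point: for $x\in V(\overline{C_{2k+1}})$ the neighbourhood $N_H(x)$ induces $\overline{P_{2k-2}}$, not $\overline{P_{2k-1}}$.)
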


In this paper, we generalise the result of Theorem~\ref{thm_Brause_perfect}.
We recall that, given a graph $G$, a {\it hole} in $G$ is an induced cycle of length at least $4$,
and an {\it antihole} in $G$ is an induced subgraph whose complement is a cycle of length at least $4$.
A hole (antihole) is {\it odd} if it has an odd number of vertices.
As a main tool, we shall use the following result of Chudnovsky et al.~\cite{CRST}.

\begin{theorem}[The Strong Perfect Graph Theorem~\cite{CRST}]\label{SPGT}
A graph is perfect if and only if it contains neither an odd hole
nor an odd antihole.
\end{theorem}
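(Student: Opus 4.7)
The plan is to tackle the two directions of the equivalence separately, the first being elementary and the second being the deep part.

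For the easy direction (perfect implies no odd hole or antihole), I would verify directly that odd holes and odd antiholes are themselves not perfect, since perfection is preserved under taking induced subgraphs. An odd hole $C_{2k+1}$ (with $k\ge 2$) has $\omega = 2$ but $\chi = 3$. An odd antihole $\overline{C_{2k+1}}$ has $\omega = k$ (since the largest clique in $\overline{C_{2k+1}}$ corresponds to a largest independent set in $C_{2k+1}$) but $\chi = k+1$ (each colour class is a clique in $C_{2k+1}$, hence has at most two vertices). So any perfect graph must be Berge.

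For the hard direction (Berge implies perfect), the approach follows the structural programme of Chudnovsky, Robertson, Seymour and Thomas. I would consider a minimum imperfect Berge graph $G$ and aim to contradict its existence via a decomposition theorem: every Berge graph either belongs to one of five \emph{basic} classes (bipartite graphs, complements of bipartite graphs, line graphs of bipartite graphs, their complements, and the so-called double split graphs), or admits one of three structural decompositions (a 2-join, a homogeneous pair, or a balanced skew partition). The basic classes can be checked to be perfect by hand (or, for line graphs of bipartite graphs, via K\"onig's theorem), and each of the three decompositions can be shown to preserve perfection by classical perfection-preservation lemmas. Since a minimum imperfect Berge graph can belong to none of the basic classes and can admit none of these decompositions, no such graph exists.

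The main obstacle, and essentially the entire content of the theorem, is the proof of the structural decomposition theorem itself. The strategy I would pursue mirrors the original one: first show that a Berge graph containing certain configurations (such as a \emph{proper wheel} or a \emph{long prism}) admits one of the decompositions, then handle the remaining ``trim'' Berge graphs through a detailed analysis of even pairs and attachments. The technical heart is the ``cleaning'' argument, which extracts a usable decomposition from a generic structural witness, and this is what pushes the original argument to roughly 180 pages. I do not expect a substantially shorter route to the full theorem, so my plan for this step amounts to following the existing one.
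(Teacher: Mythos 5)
This statement is not proved in the paper at all: it is the Strong Perfect Graph Theorem, quoted verbatim from Chudnovsky, Robertson, Seymour and Thomas~\cite{CRST} and used as a black box, so there is no internal proof to compare against. Your easy direction is correct and complete: $C_{2k+1}$ ($k\ge 2$) has $\omega=2<3=\chi$, and $\overline{C_{2k+1}}$ has $\omega=k<k+1=\chi$ (colour classes being cliques of $C_{2k+1}$, hence of size at most $2$), so neither can occur as an induced subgraph of a perfect graph.

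For the hard direction you have written an outline of the original CRST programme rather than a proof, and you say so yourself; as a roadmap it is essentially accurate, but two points deserve flagging. First, the published decomposition theorem does not in the end need homogeneous pairs --- Chudnovsky showed they can be eliminated, so the final statement uses only $2$-joins, $2$-joins in the complement, and balanced skew partitions (plus the five basic classes). Second, and more substantively, it is not true that the balanced skew partition ``preserves perfection by classical perfection-preservation lemmas'': showing that a \emph{minimum} imperfect Berge graph admits no balanced skew partition was itself one of the genuinely new and difficult steps of the argument (a descendant of Chv\'atal's star-cutset lemma, but far from a routine application of it), whereas the $2$-join cases are handled by reducing to smaller Berge graphs. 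So your plan correctly identifies the architecture but understates where the difficulty sits within it. Since the paper itself only cites this theorem, the appropriate resolution here is a citation, not a reproduced proof.
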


With the aid of Theorem~\ref{SPGT} and using results of Olariu and Ben Rebea
(see Theorem~\ref{theorem olariu} and Lemma~\ref{lemma Ben Rebea}, respectively),
we present proofs of the following two results: 

\begin{theorem}\label{noAlpha}
Let $Y$ be a graph.
If $\mathcal{G}$ denotes the class of all connected $(K_{1,3},Y)$-free graphs
that are distinct from an odd cycle, then the following statements are equivalent.
\begin{itemize}
\item[$(1)$] Every graph of $\mathcal{G}$ is perfect.
\item[$(2)$] Every graph of $\mathcal{G}$ is $\omega$-colourable.
\item[$(3)$] $Y$ is an induced subgraph of $P_4$ or of $Z_1$.
\end{itemize}
Furthermore, if $P_4$ and $Z_1$ are $Y$-free, then there exist infinitely many graphs
of $\mathcal{G}$ which are not $\omega$-colourable.
\end{theorem}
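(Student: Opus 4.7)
The plan is to prove the equivalences via the cyclic chain $(1)\Rightarrow(2)\Rightarrow(3)\Rightarrow(1)$ and to embed the ``Furthermore'' clause into the contrapositive of $(2)\Rightarrow(3)$.

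The implication $(1)\Rightarrow(2)$ is immediate from the definition of perfection. For $(3)\Rightarrow(1)$, observe that if $Y$ is an induced subgraph of $P_4$ (respectively $Z_1$), then every $Y$-free graph is $P_4$-free (respectively $Z_1$-free), so the class $\mathcal{G}$ is contained in the class of connected $(\claw,P_4)$-free (respectively $(\claw,Z_1)$-free) non-odd-cycle graphs. It therefore suffices to show these two extremal classes consist of perfect graphs. Every $P_4$-free graph is a cograph and hence perfect. For $Y=Z_1$, Olariu's Theorem~\ref{theorem olariu} asserts that every connected paw-free graph is triangle-free or complete multipartite; a triangle-free $\claw$-free graph has maximum degree at most two and, being connected and not an odd cycle, is a path or an even cycle, hence bipartite and perfect, while complete multipartite graphs are always perfect.

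For $(2)\Rightarrow(3)$ together with the ``Furthermore'' clause I argue the contrapositive. Assume $Y$ is neither an induced subgraph of $P_4$ nor of $Z_1$, and aim to construct infinitely many connected $(\claw,Y)$-free graphs, distinct from odd cycles, with $\chi>\omega$. The key observation is that the induced subgraphs of $C_{2k+1}$ are either $C_{2k+1}$ itself or disjoint unions of paths; equivalently, the proper induced subgraphs of $\overline{C_{2k+1}}$ are complete joins of complements of paths. Two cases arise. In \emph{Case~A}, where $\overline{Y}$ is not a disjoint union of paths, I use the family $\{\overline{C_{2k+1}}:k\geq 3\}$: each member is $\claw$-free (since $\alpha=2$), distinct from an odd cycle, satisfies $\chi=k+1>k=\omega$, and is $Y$-free for all but at most one value of $k$ (the exception only if $\overline{Y}$ is itself a cycle). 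In \emph{Case~B}, where $\overline{Y}$ is a disjoint union of paths, write $Y=\overline{P_{a_1}}\vee\cdots\vee\overline{P_{a_r}}$; a short case distinction on $r$ and the $a_i$ reveals that every bad $Y$ of this form contains $C_4$, $K_4$, or the diamond $K_4-e$ as induced subgraph ($r\geq 4$ yields $K_4$; $r=3$ with some $a_i\geq 2$ yields the diamond; $r=2$ with both $a_i\geq 2$ yields $C_4$; $r=2$ with one $a_i=1$ and the other at least $4$ yields the diamond via the gem; and $r=1$ with $a_1\geq 5$ yields $\overline{P_5}$ or ``house'', which contains $C_4$). As witnesses I take line graphs of an infinite family of $3$-regular triangle-free class-$2$ graphs of girth at least five (for example, the flower snarks): each such $L(H)$ is $\claw$-free (as a line graph), $K_4$-free (with $\omega=\Delta(H)=3$), diamond-free ($H$ is triangle-free), $C_4$-free ($H$ has girth $\geq 5$), connected, not an odd cycle, and satisfies $\chi=\chi'(H)=4>3=\omega$.

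The principal obstacle is Case~B: verifying the case-by-case structural claim that every bad join of path-complements induces at least one of $C_4$, $K_4$, or $K_4-e$, and invoking the (classical) existence of an infinite family of snarks of girth at least five to supply the witness graphs.
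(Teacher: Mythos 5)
Your proof is correct, and for the crucial direction --- the contrapositive of $(2)\Rightarrow(3)$ together with the ``furthermore'' clause --- it takes a genuinely different route from the paper. The paper (via its Lemma on auxiliary families) constructs three bespoke families: $\mathcal{F}^0$ (a $C_5$ joined completely to a clique), $\mathcal{F}^1$ (an odd cycle with apex vertices over consecutive triples), and $\mathcal{F}^3$ (a more intricate diamond-free odd-cycle construction), verifying $\chi>\omega$ for the latter two by explicit colour-propagation arguments around the cycle; it then lists the small graphs these families avoid ($3K_1$, $2K_2$, $K_1\cup K_3$, $K_4$, $D$, $C_4$, $C_5$) and shows that any $Y$ avoiding all of them embeds in $P_4$ or $Z_1$. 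You instead split on whether $\overline{Y}$ is a linear forest: complements of odd cycles replace $\mathcal{F}^0$ (with $\chi=k+1>k=\omega$ immediate and $Y$-freeness read off from the induced subgraphs of $C_{2k+1}$), and a single family --- line graphs of cubic triangle-free class-two graphs of girth at least five --- simultaneously plays the roles of $\mathcal{F}^1$ and $\mathcal{F}^3$, since such line graphs are $(\claw,K_4,D,C_4)$-free and $\chi=\chi'(H)=4>3=\omega$ comes for free from the class-two property rather than from a colouring argument. Your case analysis of $Y=\overline{P_{a_1}}\vee\cdots\vee\overline{P_{a_r}}$ checks out in every branch (I verified $r\geq 4$, $r=3$, $r=2$, $r=1$, including the excluded subcases $K_3$, $Z_1$, and the induced subgraphs of $P_4$), and the structural facts about line graphs of triangle-free cubic graphs of girth at least five are standard. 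The trade-off is that your argument is shorter and the witnesses are cleaner, but it imports the existence of infinitely many snarks of girth at least five (e.g.\ Isaacs' flower snarks), whereas the paper's construction is self-contained; the $(1)\Rightarrow(2)$ and $(3)\Rightarrow(1)$ implications are handled essentially as in the paper (Olariu's theorem plus the degree-two argument for the triangle-free case).
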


\begin{theorem}\label{alpha3}
Let $Y$ be a graph.
If $\mathcal{G}_3$ denotes the class of all connected $(K_{1,3},Y)$-free graphs that are distinct from an odd cycle and which have independence number at least $3$, then the following statements are equivalent.
\begin{itemize}
\item[$(1)$] Every graph of $\mathcal{G}_3$ is perfect.
\item[$(2)$] Every graph of $\mathcal{G}_3$ is $\omega$-colourable.
\item[$(3)$] $Y$ is an induced subgraph of $P_5$ or of $Z_2$.
\end{itemize}
Furthermore, if $P_5$ and $Z_2$ are $Y$-free, then there exist infinitely many graphs
of $\mathcal{G}_3$ which are not $\omega$-colourable.
\end{theorem}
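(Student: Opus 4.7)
I would prove the equivalence by establishing $(1) \Rightarrow (2)$, proving $(3) \Rightarrow (1)$ directly, and obtaining $(2) \Rightarrow (3)$ as the contrapositive of the ``furthermore'' statement, which I would prove separately. This closes the cycle, and since $(1) \Rightarrow (2)$ is immediate from the definition of perfectness, the real work lies in $(3) \Rightarrow (1)$ and in producing the family of counterexamples for the ``furthermore'' claim.

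For $(3) \Rightarrow (1)$, let $G$ belong to $\mathcal{G}_3$ and be $(\claw, Y)$-free with $Y$ an induced subgraph of $P_5$ or of $Z_2$; since $Y$-freeness is weakest for the two maximal choices, it suffices to handle $Y = P_5$ and $Y = Z_2$ separately. By Theorem~\ref{SPGT} it is enough to show that $G$ contains neither an odd hole nor an odd antihole. For odd holes, the key observation is that $C_{2k+1}$ contains an induced $P_5$ for every $k \geq 3$, so in the $P_5$-free case only $C_5$ survives; $C_5$ itself is eliminated by choosing a third independent vertex $v$ (which exists because $\alpha(G) \geq 3$), noting that claw-freeness forces $N(v) \cap V(C_5)$ to be very restricted, and checking that in each admissible configuration the induced subgraph on $V(C_5) \cup \{v\}$ contains an induced $P_5$ or $Z_2$, contradicting the hypothesis (and using that $G$ is not an odd cycle). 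For the $(\claw, Z_2)$-free case with a long odd hole, the triangle-freeness of $C_{2k+1}$ combined with Theorem~\ref{theorem olariu} forces any external vertex to create either a claw or a $Z_2$. The delicate case is that of odd antiholes $\overline{C_{2k+1}}$ with $k \geq 3$, which are themselves $\claw$-free and hence intrinsically possible; there I would apply Lemma~\ref{lemma Ben Rebea} of Ben Rebea to constrain the adjacency of a vertex outside the antihole and then enumerate the few admissible attachments, showing that each produces an induced $P_5$ or an induced $Z_2$. I expect this odd-antihole analysis to be the principal technical obstacle.

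For the ``furthermore'' statement, and hence for $(2) \Rightarrow (3)$, I would exhibit, for each graph $Y$ that embeds in neither $P_5$ nor $Z_2$, an infinite family of connected $(\claw, Y)$-free graphs in $\mathcal{G}_3$ that fail to be $\omega$-colourable. Natural candidates are the line graphs $L(K_n)$ for odd $n \geq 7$ (which are $\claw$-free, have $\alpha = \lfloor n/2 \rfloor \geq 3$, and satisfy $\chi = n > n - 1 = \omega$) and suitably modified odd antiholes $\overline{C_{2k+1}}$, padded to raise $\alpha$ above $2$ without creating a claw. The plan is to split by the smallest forbidden configuration witnessed in $Y$ (a path longer than $P_5$, a triangle-plus-path longer than $Z_2$, an independent triple combined with an edge, and so on) and to match each case with a family from above whose verification of $Y$-freeness is routine but calls for careful bookkeeping.
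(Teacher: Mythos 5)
Your skeleton matches the paper's (the $(1)\Rightarrow(2)$ and $(2)\Rightarrow(3)$ reductions are exactly what the paper does), but the two places where you locate ``the real work'' both contain genuine gaps. First, in the $(3)\Rightarrow(1)$ direction your elimination of $C_5$ does not go through as stated: it is \emph{false} that every admissible attachment of a single vertex $v$ to a $C_5$ forces an induced $P_5$ or $Z_2$ in $V(C_5)\cup\{v\}$. If $N(v)\cap V(C_5)$ induces a $P_4$ or all of $C_5$ (both allowed by claw-freeness), the resulting six-vertex graph contains neither $P_5$ nor $Z_2$ — the wheel $C_5$ plus a dominating vertex is $(\claw,P_5,Z_2)$-free. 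The hypothesis $\alpha(G)\ge 3$ cannot be cashed in locally on one vertex; the paper needs a global argument (its Lemma~\ref{C_5}): one first shows via the $6$-vertex graph $\Theta$ that every vertex attached to the $C_5$ has two non-adjacent neighbours on it, deduces by connectivity and claw-freeness that \emph{every} vertex of $G$ is attached to the $C_5$, and only then analyses the graph induced by $C\cup I$ for a full independent triple $I$. Second, you misread Ben Rebea's Lemma~\ref{lemma Ben Rebea}: it does not ``constrain the adjacency of a vertex outside the antihole''; it says that a connected claw-free graph with $\alpha\ge 3$ containing an odd antihole must contain a hole of length $5$. Used correctly, the odd-antihole case is \emph{not} the principal obstacle — it evaporates the moment $C_5$-freeness is established — whereas the attachment enumeration you propose in its place has no evident endpoint.

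The ``furthermore'' part is also underpowered. You must produce, for \emph{every} $Y$ embeddable in neither $P_5$ nor $Z_2$, an infinite non-$\omega$-colourable family in $\mathcal{G}_3$ that is $Y$-free, and your two candidate families cannot cover all such $Y$: the line graphs $L(K_n)$ contain induced $K_4$, $D$, $H$ and $C_5$, so they fail already for $Y=K_4$ or $Y=D$, and it is not clear what ``padded'' odd antiholes are, whether they stay non-$\omega$-colourable, or which $Y$ they handle. The paper's Lemma~\ref{families} needs four tailor-made families $\mf^1,\dots,\mf^4$ with complementary forbidden-subgraph profiles ($(B,K_4,C_4,C_5,C_6)$-free, $H$-free, $D$-free, and $(4K_1,2K_1\cup K_2,K_2\cup K_3)$-free, respectively), together with a separate structural argument showing that any $Y$ avoiding \emph{all} of these configurations must embed in $P_5$ or $Z_2$. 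That covering analysis is the actual content of the implication $(2)\Rightarrow(3)$, and your proposal defers it entirely to ``routine bookkeeping.''
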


As a first step for proving these two theorems, we specify possible graphs $Y$ in Section~\ref{s2}.
The proofs of Theorems~\ref{noAlpha} and~\ref{alpha3} are presented in Section~\ref{s3}.
In addition, we study imperfect $(\claw,B)$-free graphs of independence number at least $3$ in Section~\ref{s4}. 

We conclude this section by recalling some further notation.
Given two graphs $G$ and $H$, we let $G \cup H$ denote the disjoint union of graphs $G$ and $H$ but write $2G$ for $G \cup G$.
For brevity, we let $[k]$ denote the set $\{1,2,\ldots,k\}$, and let $P_{k}$ ($C_{k}$) denote the path (cycle) on $k$ vertices. 
We use the notation $C_k\colon u_1u_2\dots u_{k}u_1$
to indicate the ordering of the vertices of the cycle $C_k$.
When working with the vertices of $C_k$, all calculations using indices are considered modulo~$k$.
Finally, let us remark that the graphs, that are depicted in Fig.\,\ref{figure forbsubgr},
are used throughout our paper.
\begin{center}
\begin{figure}[ht]
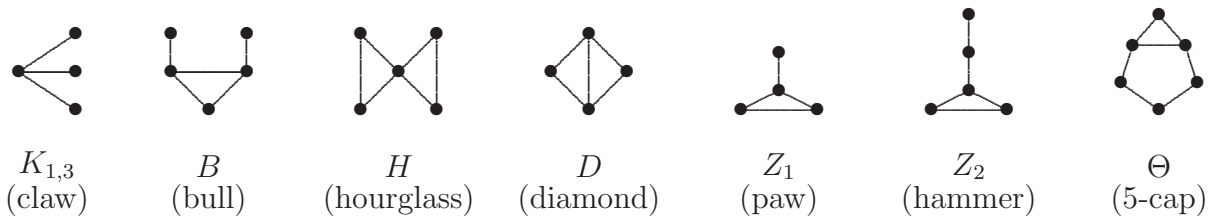

\beginpicture
\setcoordinatesystem units <0.55mm,0.5mm>
\setplotarea x from -5 to 180, y from -25 to 40
\put{
\beginpicture
\setcoordinatesystem units <0.5mm,0.5mm>
\setplotarea x from -5 to 5, y from  0 to 20
\put{$\bullet$} at 10 25
\put{$\bullet$} at 25 35
\put{$\bullet$} at 25 25
\put{$\bullet$} at 25 15
\plot 25 35  10 25  25 25  10 25  25 15 /
\put{$\bullet$} at 60 15
\put{$\bullet$} at 50 25
\put{$\bullet$} at 70 25
\put{$\bullet$} at 50 35
\put{$\bullet$} at 70 35
\plot 50 35  50 25  60 15  70 25  50 25  70 25  70 35 /
\put{$\bullet$} at 100 15
\put{$\bullet$} at 100 35
\put{$\bullet$} at 110 25
\put{$\bullet$} at 120 35
\put{$\bullet$} at 120 15
\plot 110 25  100 15  100 35  120 15  120 35  110 25 /
\put{$\bullet$} at 150 25
\put{$\bullet$} at 160 35
\put{$\bullet$} at 160 15
\put{$\bullet$} at 170 25
\plot 160 35  150 25  160 15  170 25  160 35  160 15 /
\put{$\bullet$} at 200 15
\put{$\bullet$} at 220 15
\put{$\bullet$} at 210 20
\put{$\bullet$} at 210 30
\plot 210 20  200 15  220 15  210 20 210 30 /
\put{$\bullet$} at 250 15
\put{$\bullet$} at 270 15
\put{$\bullet$} at 260 20
\put{$\bullet$} at 260 30
\put{$\bullet$} at 260 40
\plot 260 20  250 15  270 15  260 20  260 40 /
\put{$\claw$} at 17.5 0
\put{(claw)} at 17.5 -9
\put{$B$} at 60 0
\put{(bull)} at 60 -9
\put{$H$} at 110 0
\put{(hourglass)} at 110 -9
\put{$D$} at 160 0
\put{(diamond)} at 160 -9
\put{$Z_1$} at 210 0
\put{(paw)} at 210 -9
\put{$Z_2$} at 260 0
\put{(hammer)} at 260 -9
\put{$\bullet$} at  303 32   %-20   20
\put{$\bullet$} at  317 32   % 20   20    
\put{$\bullet$} at  300 22  %-30   -10
\put{$\bullet$} at  320 22  % 30   -10
\put{$\bullet$} at  310 15  % 0	  -30
\put{$\bullet$} at  310 40  % 0	  45
\plot 300 22  303 32  317 32  320 22  310 15  300 22  /
\plot 303 32  310 40  317 32 /
\put{$\Theta$} at 310 0
\put{($5$-cap)} at 310 -9

\endpicture} at  0 0

%\beginpicture
%\setcoordinatesystem units <1.0mm,0.8mm>
%\setplotarea x from -50 to 50, y from -2 to 2
%%
%%
%\put{
%\beginpicture
%\setcoordinatesystem units <0.17mm,0.17mm>
%\setplotarea x from -5 to 5, y from  0 to 38
%%
%\put{$\bullet$} at   -20   20
%\put{$\bullet$} at    20   20    
%%
%\put{$\bullet$} at  -30   -10
%\put{$\bullet$} at   30   -10
%\put{$\bullet$} at   0	  -30
%\put{$\bullet$} at   0	  45
%%
%\plot -30 -10  -20 20 20 20 30 -10 0 -30 -30 -10  /
%\plot -20 20 0 45 20 20 /
%\put{$\Theta$} at 0 -65
%\put{$\Theta$} at 0 -88
%%
%\endpicture}  at    40  0
%%
%%
%%
%
%%
%\endpicture
%
\endpicture
\caption{Some of the used forbidden subgraphs.}
\label{figure forbsubgr}
\end{figure}
\end{center}

\section{Auxiliary families of graphs}\label{s2}

In this section, we show the following:

\begin{lemma}\label{families}
For every graph $Y$ for which $P_4$ and $Z_1$ are $Y$-free,
there exist infinitely many connected $(K_{1,3},Y)$-free graphs that are distinct from an odd cycle but not $\omega$-colourable.
Furthermore, if $P_5$ and $Z_2$ are $Y$-free,
then there are infinitely many such graphs whose independence number is at least $3$.
\end{lemma}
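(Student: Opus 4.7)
The plan is to exhibit a small number of infinite families of connected $\claw$-free imperfect graphs distinct from odd cycles, and then case-analyse $Y$ so that every admissible $Y$ is avoided by at least one family. The three families I would use are $(i)$ the odd antiholes $\overline{C_{2k+1}}$ for $k\ge 3$; $(ii)$ the blow-ups of $C_5$ in which each vertex is replaced by a clique of size $t$, for $t\ge 2$; and $(iii)$ the line graphs $L(H_k)$ where $H_k$ is the cycle $C_{2k+1}$ with a pendant edge attached at one vertex, for $k\ge 2$. Routine verifications handle the common properties: in $(i)$ a neighborhood corresponds to an anti-edge in $C_n$, forcing $\alpha\le 2$ and hence $\claw$-freeness, while Theorem~\ref{SPGT} provides imperfection; in $(ii)$ the bound $\alpha(C_5)=2$ gives $\claw$-freeness and the induced $C_5$ forces imperfection, with a short case analysis on the cliques containing the four vertices of a hypothetical induced $C_4$ yielding $C_4$-freeness; in $(iii)$ line graphs are always $\claw$-free, while $H_k$ being triangle-free of maximum degree $3$ makes $L(H_k)$ both diamond-free and $K_4$-free, and the cycle $C_{2k+1}\subset H_k$ yields an induced odd hole in $L(H_k)$.

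The core of the argument rests on two structural observations. First, an induced subgraph of $C_n$ on fewer than $n$ vertices is always a disjoint union of paths, so family $(i)$ is $Y$-free whenever $\overline{Y}$ is not a linear forest. Second, if $\overline{Y}$ is a linear forest on at least four vertices and $Y$ is neither $P_4$ nor $Z_1$, then $Y$ contains at least one of $C_4$, $D$, $K_4$ as an induced subgraph. For $|Y|=4$ this is a listing: the five four-vertex linear forests $P_4$, $P_3\cup K_1$, $K_2\cup 2K_1$, $2K_2$, $4K_1$ have respective complements $P_4$, $Z_1$, $D$, $C_4$, $K_4$, and removing the two excluded values leaves precisely $\{C_4,D,K_4\}$. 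For $|Y|\ge 5$, a pigeonhole on the components of $\overline{Y}$ shows that $\overline{Y}$ must contain $2K_2$, $K_2\cup 2K_1$, or $4K_1$, whose complements are $C_4$, $D$, $K_4$ respectively. These observations dispatch every admissible $Y$: family $(i)$ when $\overline{Y}$ is not a linear forest, family $(ii)$ when $Y$ contains $C_4$, and family $(iii)$ when $Y$ contains $D$ or $K_4$.

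For the Furthermore clause, families $(i)$ and $(ii)$ both have $\alpha=2$ and must be replaced by graphs with independence number at least $3$. I would swap family $(ii)$ for the blow-ups of $C_7$ (the same style of case analysis gives $C_4$-freeness, and now $\alpha=3$), keep family $(iii)$ since $\alpha(L(H_k))=k+1\ge 3$, and argue by a parallel structural case analysis under the stronger hypothesis (that $P_5$ and $Z_2$ are both $Y$-free) that the admissible $Y$'s again fall into these two handled types. A handful of boundary $Y$'s where neither naturally provides $Y$-freeness, notably $Y=K_2\cup 2K_1$ which the $C_7$-blow-ups contain, would require an auxiliary dedicated construction; a candidate is a line graph tuned so that the underlying graph has no two edges sharing a vertex together with two further disjoint edges in the rest (the configuration that produces $K_2\cup 2K_1$ in the line graph).

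The principal obstacle is precisely this structural case analysis: verifying exhaustively that the three families suffice, and, for the Furthermore part, handling the few corner $Y$'s where the two higher-$\alpha$ families fail to provide $Y$-freeness and a tailored construction must be supplied.
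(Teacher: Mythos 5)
Your overall architecture is the same as the paper's: exhibit a few infinite families of connected, non‑cycle, $\claw$-free graphs with $\chi>\omega$, each of which avoids certain small graphs, and then show by case analysis that every admissible $Y$ contains one of those small graphs. Your family $(i)$ (odd antiholes) and family $(ii)$ (clique blow-ups of $C_5$) are sound, and the reduction of the first statement to the three targets $C_4$, $D$, $K_4$ via complements of linear forests is correct. The genuine gap is family $(iii)$: if $H_k$ is $C_{2k+1}$ with one pendant edge, then $H_k$ is connected, triangle-free, with $\Delta(H_k)=3$, and it admits a proper $3$-edge-colouring (colour the cycle edges $1,2,1,2,\dots,1,2,3$ so that the two cycle edges at the degree-$3$ vertex get colours $1$ and $2$, and give the pendant edge colour $3$). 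Hence $\chi(L(H_k))=\chi'(H_k)=3=\omega(L(H_k))$: these graphs are imperfect (they contain an odd hole) but they \emph{are} $\omega$-colourable, whereas the lemma requires graphs that are not $\omega$-colourable. This breaks exactly the cases your analysis routes to family $(iii)$: for example $Y=K_4$ and $Y=D$ are admissible (neither is an induced subgraph of $P_4$ or $Z_1$), their complements are linear forests so family $(i)$ does not apply, and the $C_5$-blow-ups contain both $K_4$ and $D$, so no family remains. The paper's families $\mathcal{F}^1$ and $\mathcal{F}^3$ exist precisely to repair this: they attach gadgets around the \emph{entire} odd cycle so that any $3$-colouring propagates an equality of colours all the way around and produces a parity contradiction; a single local attachment cannot force $\chi>\omega$.

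The ``furthermore'' part is also incomplete as written. You correctly notice that $Y=2K_1\cup K_2$ is admissible yet contained in both of your surviving families, but the ``auxiliary dedicated construction'' you defer to is not supplied, and the same problem arises for $Y=4K_1$, $Y=K_2\cup K_3$ and $Y=H$ (the hourglass), none of which is an induced subgraph of $P_5$ or $Z_2$: the $C_7$-blow-ups contain $2K_1\cup K_2$, $K_2\cup K_3$ and $H$, and your third family is the broken one. The paper fills these holes with $\mathcal{F}^4$ (complements of a $C_{3s}$ joined to a triangle in a prescribed pattern, which are $(\claw,4K_1,2K_1\cup K_2,K_2\cup K_3)$-free with independence number exactly $3$) and with the hourglass-free family $\mathcal{F}^2$; constructing and verifying these, together with the closing case analysis showing that every $Y$ avoiding all of $4K_1$, $2K_1\cup K_2$, $K_2\cup K_3$, $B$, $K_4$, $D$, $H$, $C_4$, $C_5$, $C_6$ is an induced subgraph of $P_5$ or $Z_2$, is the substantive content of the second half of the proof rather than a boundary detail.
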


We define several graph families which will be used for proving Lemma~\ref{families}.

For $s\geq 1$, we let $F_s^0$ be the graph obtained from the cycle $C_5$ by adding a complete graph $K_s$,
and by adding the edges connecting every vertex of 
the cycle to every vertex of the complete graph.
(To be more precise, the complement of $F_s^0$
is $sK_1 \cup C_5$.)

For $s\geq 3$, we let $F_s^1$ be the graph obtained from $C_{2s+1}\colon u_1u_2\dots u_{2s+1}u_1$ by adding $s$ isolated vertices $x_1,x_2, \dots, x_s$, and by adding the edges $x_iu_{2i-1}$, $x_iu_{2i}$ and $x_iu_{2i+1}$ for every $i\in[s]$.

For $s\geq 2$, we let $F_s^2$ be the graph obtained from the cycles
$C_{2s+1}\colon u_1u_2\dots u_{2s+1}u_1$
and $C_{5}\colon x_1x_2\dots x_{5}x_1$
by identifying the vertex $u_2$ with $x_2$ and identifying $u_3$ with $x_3$,
by adding a vertex $z$ such that $N(z) = \{x_1, x_2, \dots, x_5\}$,
and by adding the edges $x_1u_1$ and $x_4u_4$.

For $s\geq 1$, we let $F_s^3$ be the graph obtained from
$C_{6s+1}\colon u_1 u_2 \dots  u_{6s+1} u_1$
by adding $3s$ isolated vertices $x_1^i,x_2^i,x_3^i$ (for $i\in[s]$),
and by adding the edges $x_1^i u_{6i-5}$, $x_1^i u_{6i-4}$, $x_1^i u_{6i-2}$, $x_1^i u_{6i-1}$
and $x_2^i u_{6i-4}$, $x_2^i u_{6i-3}$, $x_2^i u_{6i-1}$, $x_2^i u_{6i}$,
and $x_3^i u_{6i-3}$, $x_3^i u_{6i-2}$, $x_3^i u_{6i}$, $x_3^i u_{6i+1}$ for every $i\in[s]$. 

We define graphs $F_s^4$ by considering their complements $\overline{F_s^4}$.
For odd $s\geq 3$, we let $\overline{F_s^4}$ be the graph obtained from
the cycles $C_{3s}\colon u_1u_2\dots u_{3s}u_1$  and $C_{3}\colon x_1 x_2 x_3 x_1$,
by adding the edges $x_1 u_{3i-2}$, $x_2 u_{3i-1}$ and $x_3 u_{3i}$ for every $i\in[s]$.

The structure of the graphs $F_s^1$, $F_s^2$, $F_s^3$ and $\overline{F_s^4}$
is depicted in Fig.\,\ref{figure 4graphs}.
In this section, we will use the above defined labelling of the vertices of $F_s^1$, $F_s^2$, $F_s^3$ and $F_s^4$.

\begin{figure}[ht]
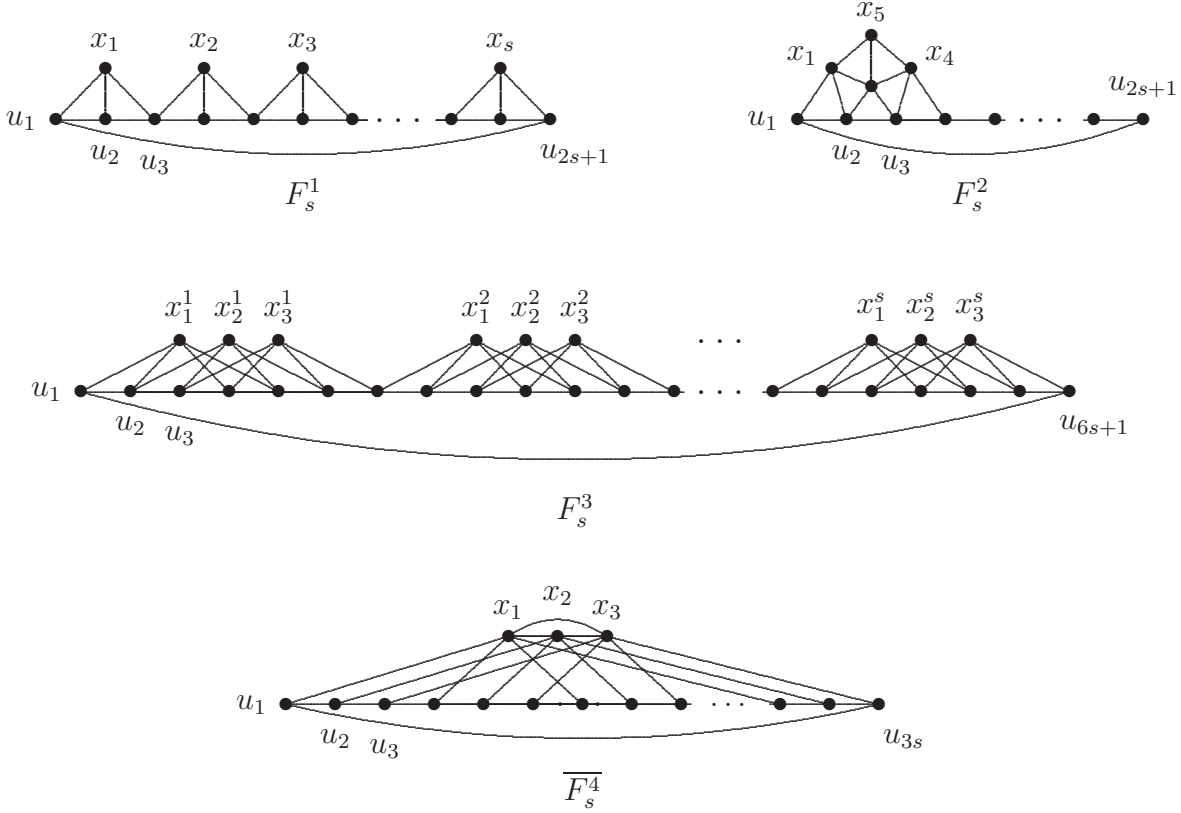

\beginpicture
\setcoordinatesystem units <0.65mm,0.45mm>
\setplotarea x from 0 to 155, y from -150 to 90
\put{$\bullet$} at 10 60
\put{$\bullet$} at 20 60
\put{$\bullet$} at 30 60
\put{$\bullet$} at 40 60
\put{$\bullet$} at 50 60
\put{$\bullet$} at 60 60
\put{$\bullet$} at 70 60
\put{$\bullet$} at 90 60
\put{$\bullet$} at 100 60
\put{$\bullet$} at 110 60
\put{$\bullet$} at 20 75
\put{$\bullet$} at 40 75
\put{$\bullet$} at 60 75
\put{$\bullet$} at 100 75
\put{$x_1$} at 20 83
\put{$x_2$} at 40 83
\put{$x_3$} at 60 83
\put{$x_s$} at 100 83
\put{$u_1$} at 3 60
\put{$u_2$} at 20 50
\put{$u_3$} at 30 47
\put{$u_{2s+1}$} at 115 50
\Large
\put{\dots} at 80 60
\normalsize
\plot 10 60  73 60  70 60  60 75  60 60  60 75  50 60  40 75  40 60  40 75  30 60  20 75  20 60  20 75  10 60 /
\plot 110 60  100 75  100 60  100 75  90 60  87 60  110 60 /
\setquadratic
\plot 10 60  60 50  110 60 /
\setlinear
\put{$\bullet$} at 170 60
\put{$\bullet$} at 180 60
\put{$\bullet$} at 190 60
\put{$\bullet$} at 200 60
\put{$\bullet$} at 220 60
\put{$\bullet$} at 230 60
\put{$\bullet$} at 160 60
\put{$\bullet$} at 175 70
\put{$\bullet$} at 167 75
\put{$\bullet$} at 183 75
\put{$\bullet$} at 175 85
\put{$u_1$} at 153 60
\put{$u_2$} at 170 50
\put{$u_3$} at 180 47
\put{$u_{2s+1}$} at 230 70
\put{$x_1$} at 161 79
\put{$x_4$} at 189 79
\put{$x_5$} at 175 92
\Large
\put{\dots} at 210 60
\normalsize
\plot 202 60  160 60  167 75  175 85  183 75  190 60  180 60  183 75  175 70  175 85  175 70  170 60  167 75  175 70  180 60 /
\plot 218 60  230 60 /
\setquadratic
\plot 160 60  195 50  230 60 /
\setlinear
\put{
\beginpicture
\put{$\bullet$} at 20 0
\put{$\bullet$} at 30 0
\put{$\bullet$} at 40 0
\put{$\bullet$} at 50 0
\put{$\bullet$} at 60 0
\put{$\bullet$} at 70 0
\put{$\bullet$} at 80 0
\put{$\bullet$} at 90 0
\put{$\bullet$} at 100 0
\put{$\bullet$} at 110 0
\put{$\bullet$} at 120 0
\put{$\bullet$} at 130 0
\put{$\bullet$} at 140 0
\put{$\bullet$} at 160 0
\put{$\bullet$} at 170 0
\put{$\bullet$} at 180 0
\put{$\bullet$} at 190 0
\put{$\bullet$} at 200 0
\put{$\bullet$} at 210 0
\put{$\bullet$} at 220 0
\put{$\bullet$} at 40 15
\put{$\bullet$} at 50 15
\put{$\bullet$} at 60 15
\put{$\bullet$} at 100 15
\put{$\bullet$} at 110 15
\put{$\bullet$} at 120 15
\put{$\bullet$} at 180 15
\put{$\bullet$} at 190 15
\put{$\bullet$} at 200 15
%\put{$\bullet$} at
\plot 142 0  20 0  40 15  30 0  50 0  40 15  60 0  80 0  60  15  70 0  50 15  60 0  50 0  60 15  40 0  50 15  30 0  80 0  100 15  90 0  110 15  100 0  120 15  110 0  100 15  120 0 110 15  130 0  120 15  140 0 /
\plot 158 0  220 0  200 15  210 0  190 15  200 0  180 15  190 0  200 15  180 0  190 15  170 0  180 15  160 0 /
\Large
\put{\dots} at 150 0
\put{\dots} at 150 15
\normalsize
%\put{$\overbrace{\mbox{\hspace*{1.6cm}}}$} at 50 20
%\put{$\overbrace{\mbox{\hspace*{1.6cm}}}$} at 110 20
%\put{$\overbrace{\mbox{\hspace*{1.6cm}}}$} at 190 20
%\put{$1$} at 50 28
%\put{$2$} at 110 28
%\put{$s$} at 190 28
\put{$x_1^1$} at 40 25
\put{$x_2^1$} at 50 25
\put{$x_3^1$} at 60 25
\put{$x_1^2$} at 100 25
\put{$x_2^2$} at 110 25
\put{$x_3^2$} at 120 25
\put{$x_1^s$} at 180 25
\put{$x_2^s$} at 190 25
\put{$x_3^s$} at 200 25
\put{$u_1$} at 13 0
\put{$u_2$} at 30 -10
\put{$u_3$} at 40 -13
\put{$u_{6s+1}$} at 225 -10
\setquadratic
\plot 20 0  120 -20  220 0 /
\setlinear
\put{$F_s^3$} at 120 -35
\endpicture
} at 115 -25

\put{
\beginpicture
\put{$\bullet$} at 20 0
\put{$\bullet$} at 30 0
\put{$\bullet$} at 40 0
\put{$\bullet$} at 50 0
\put{$\bullet$} at 60 0
\put{$\bullet$} at 70 0
\put{$\bullet$} at 80 0
\put{$\bullet$} at 90 0
\put{$\bullet$} at 100 0
\put{\dots} at 110 0
\put{$\bullet$} at 120 0
\put{$\bullet$} at 130 0
\put{$\bullet$} at 140 0
\put{$\bullet$} at 65 20
\put{$\bullet$} at 75 20
\put{$\bullet$} at 85 20
\plot 102 0  20 0  65 20  50 0  80 0  65 20  85 20  65 20  120 0 /
\plot 30 0  75 20  60 0  90 0  75 20  130 0 /
\plot 40 0  85 20  70 0  100 0  85 20  140 0  118 0 /
\setquadratic
\plot 20 0  80 -10  140 0 /
\plot 65 20  75 25  85 20 /
\setlinear
\put{$\overline{F_s^4}$} at 80 -25
\put{$u_1$} at 13 0
\put{$u_2$} at 30 -10
\put{$u_3$} at 40 -13
\put{$u_{3s}$} at 145 -10
\put{$x_1$} at 65 28
\put{$x_2$} at 75 32
\put{$x_3$} at 85 28
\Large
\put{\dots} at 80 0
\normalsize
\endpicture
} at 115 -110

\put{$F_s^1$} at 60 38
\put{$F_s^2$} at 195 38

\endpicture
\caption{The structure of the graphs $F_s^1$, $F_s^2$, $F_s^3$ and $\overline{F_s^4}$.}
\label{figure 4graphs}
\end{figure}

We let $\mf^0$ denote the family which consists of all graphs $F_s^0$.
Similarly, we let $\mf^1$, $\mf^2$, $\mf^3$, $\mf^4$ denote
the family of all graphs $F_s^1$, $F_s^2$, $F_s^3$, $F_s^4$, respectively.
We show the following:

\begin{observ} \label{obs col}
None of the graphs of $\mathcal{F}^0 \cup \mf^1 \cup \mf^2 \cup \mf^3 \cup \mf^4$ is $\omega$-colourable.
\end{observ}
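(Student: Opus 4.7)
The plan is to handle each of the five families separately, in each case computing (or suitably bounding) the clique number and exhibiting a small induced substructure that forces the chromatic number strictly above it. The unifying intuition is that every member of every family contains an ``odd'' piece (an odd cycle, an odd antihole, or an odd wheel) on top of which additional structure is bolted so as to enlarge $\omega$ and $\chi$ by the same amount, preserving a strict gap.

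I would start with the two families that essentially reduce to known small graphs. For $\mf^0$: the graph $F_s^0$ is by construction the join of $C_5$ and $K_s$, so $\omega(F_s^0)=2+s$ and $\chi(F_s^0)=3+s$. For $\mf^2$: a direct inspection of the definition shows that no $K_4$ arises (the candidate cliques around $z$ or around the identified vertices $u_2=x_2$ and $u_3=x_3$ each miss a single edge), so $\omega(F_s^2)=3$; the set $\{z,x_1,x_2,x_3,x_4,x_5\}$ induces the wheel $W_5$, giving $\chi(F_s^2)\ge\chi(W_5)=4$. For $\mf^4$ I would pass to the complement: the induced odd antihole $\overline{C_{3s}}$ living in $F_s^4$ yields $\chi(F_s^4)\ge (3s+1)/2$, and the matching bound $\omega(F_s^4)=(3s-1)/2$ follows by computing $\alpha(\overline{F_s^4})$. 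Here the key observation is that adding any $x_j$ to an independent set in $\overline{F_s^4}$ kills every third cycle vertex; the remaining $u$-vertices form $s$ disjoint edges, so an independent set containing some $x_j$ has size at most $s+1$, which is at most $(3s-1)/2$ for odd $s\ge 3$.

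For $\mf^1$ and $\mf^3$ we have $\omega=3$ (the triangles $\{x_i,u_{2i-1},u_{2i}\}$, respectively the triangles at each $x_j^i$, are easily seen to be maximal), and the strategy is to assume a proper 3-colouring and derive a contradiction on the wrap-around edge of the odd cycle. In $\mf^1$: for each $i\in[s]$ the subset $\{x_i,u_{2i-1},u_{2i},u_{2i+1}\}$ induces $K_4$ minus the edge $u_{2i-1}u_{2i+1}$, so $c(u_{2i-1})=c(u_{2i+1})$; iterating yields $c(u_1)=c(u_3)=\cdots=c(u_{2s+1})$, contradicting the edge $u_{2s+1}u_1$. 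In $\mf^3$: each $x_j^i$ is adjacent to two disjoint adjacent pairs of cycle vertices, which must each use the two colours different from $c(x_j^i)$; combining the constraints coming from $x_1^i,x_2^i,x_3^i$ in one block of seven consecutive cycle vertices determines the colouring of the block up to a small case split, and this rigidity then propagates through all $s$ blocks around $C_{6s+1}$ and fails at the wrap-around edge $u_{6s+1}u_1$.

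The main obstacle is the propagation argument for $\mf^3$, because the three attachments $x_1^i,x_2^i,x_3^i$ overlap on the middle cycle vertices, so the local colour identities couple in a nontrivial way and a careful case distinction (essentially by where the ``third'' colour is placed in each block) is required to see that the constraints accumulate consistently around the whole odd cycle. The remaining verifications, namely the routine exclusion of $K_4$ in $F_s^2$ and $F_s^3$ and the computation $\alpha(\overline{F_s^4})=(3s-1)/2$, are mostly book-keeping, but should be written out carefully to close the argument for every $s$.
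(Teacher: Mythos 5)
Your proposal is correct and follows essentially the same route as the paper: the join/clique computations for $\mathcal{F}^0$ and $\mathcal{F}^2$ (your induced $W_5$ on $\{z,x_1,\dots,x_5\}$ is exactly the paper's observation that $F_1^0$ is an induced subgraph), the colour-propagation around the odd cycle with the same local rigidity and wrap-around contradiction for $\mathcal{F}^1$ and $\mathcal{F}^3$, and passage to the complement for $\mathcal{F}^4$ with the same computation $\omega(F_s^4)=\frac{3s-1}{2}$ via bounding $\alpha(\overline{F_s^4})$. The only (harmless) divergence is your lower bound $\chi(F_s^4)\ge\frac{3s+1}{2}$ coming directly from the induced odd antihole $\overline{C_{3s}}$, whereas the paper instead counts colour classes using that $F_s^4$ is $4K_1$-free with $\{x_1,x_2,x_3\}$ as its unique independent set of size $3$, obtaining $\chi(F_s^4)\ge\frac{3s+3}{2}$; both bounds exceed $\omega(F_s^4)$, so either closes the argument.
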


\begin{proof}
For the sake of a contradiction, we suppose that $F\in \mathcal{F}^0 \cup \mf^1 \cup \mf^2 \cup \mf^3 \cup \mf^4$ is $\omega$-colourable.

If $F\in \mathcal{F}^0$, then it can be easily seen that $\omega(F)=s+2$ and $\chi(F)=s+3$, a contradiction.

If $F\in\mf^1$, then clearly $\omega(F)=3$. Since, by our supposition, $F$ is $3$-colourable, we consider a $3$-colouring of the subgraph induced by $\{ u_1, u_2, u_3, x_1 \}$,
and note that the vertices $u_1$ and $u_3$ must have the same colour.
We apply a similar reasoning for $u_3$ and $u_5$, for $u_5$ and $u_7$, \ldots, and so on.  In particular, we note that $u_1$ and $u_{2s+1}$ have the same colour, which is a contradiction since both vertices are adjacent.

If $F\in \mf^2$, then clearly $\omega(F)=3$. But since $F_1^0$ is an induced subgraph of $F$, we have $\chi(F)\geq\chi(F_1^0)>\omega(F_1^0)=3$, a contradiction.

If $F\in\mf^3$, then we observe $\omega(F)=3$. Since, by our supposition, $F$ is $3$-colourable, we consider a $3$-colouring, say $\chi$, of the subgraph induced by $\{ u_1, u_2, \dots, u_7, x_1^1, x_2^1, x_3^1 \}$.
We prove that the vertices $u_1$ and $u_7$ must have the same colour.
Since $\{u_1,u_2,x_1^1\}$ induces $K_3$, we assume $\chi(u_1)=a$, $\chi(u_2)=b$, and $\chi(x_1^1)=c$ for discussing the two possible cases.
\begin{itemize}
\item[$\bullet$] If $\chi(u_3)=a$, then $\chi(u_4)=b$ and $\chi(x_2^1)=c$.
This implies $\chi(u_5)=a$ and $\chi(x_3^1)=c$.
Consequently, we get $\chi(u_6)=b$, and thus $\chi(u_7)=a$.
\item[$\bullet$] If $\chi(u_3)=c$, then $\chi(x_2^1)=a$, and therefore $\chi(u_5)=b$.
Hence, $\chi(u_4)=a$ and $\chi(u_6)=c$,
and we get $\chi(x_3^1)=b$ and $\chi(u_7)=a$.
\end{itemize}
By a similar argument, we note that $\chi(u_{6i+1})=a$ for every $i\in[s]$, contradicting the fact that $u_1$ and $u_{6s+1}$ are adjacent.

If $F\in \mf^4$, then let $F\cong F_s^4$ for some odd $s\geq 3$. 
%$\omega(F) = \frac{3s-1}{2}$ can be easily seen by the fact that
%the independence number of the complement of $F$ is $\frac{3s-1}{2}$.
%
For showing $\chi(F) >\omega(F)$, we consider an arbitrary independent set, say $I$, of $\overline{F}$.
In the case where a vertex of $\{ x_1, x_2, x_3 \}$ belongs to $I$,
we note that $|I| \leq s+1$.
Otherwise, the set $I$ may contain up to $\frac{3s-1}{2}$ vertices since $V(F) \setminus \{ x_1, x_2, x_3 \}$ induces an odd cycle on $3s$ vertices in $\overline{F}$.
Clearly, $\frac{3s-1}{2} \geq s + 1$ since $s \geq 3$, and thus $\omega(F)=\frac{3s-1}{2}$.
Since $F$ is $4K_1$-free and $\{ x_1, x_2, x_3 \}$ is the only independent set of size~$3$ in $F$, 
at most one colour can be used three times in a colouring assigning distinct colours to adjacent vertices. So, $\chi(F)\geq 1+\frac{|V(F)|-3}{2}$.
By recalling that $|V(F)| = 3s +3$ and $|V(F)|-3$ is odd, we have $\chi(F) \geq \frac{3s+3}{2}>\frac{3s-1}{2}=\omega(G)$, a contradiction.
%Finally, we note that $F$ can be coloured using $\frac{3s+3}{2}$ colours.
\end{proof}

We apply Observation~\ref{obs col} and prove Lemma~\ref{families}.

\begin{proof}[Proof of Lemma~\ref{families}]
We recall the families of graphs $\mathcal{F}^0,\mf^1,\mf^2,\mf^3$ and $\mf^3$ as defined above. 
We note that every graph of
$\mathcal{F}^0\cup\mathcal{F}^1\cup\mathcal{F}^2\cup\mathcal{F}^3\cup\mathcal{F}^4$
is connected and distinct from an odd cycle.
Furthermore, every such graph is not $\omega$-colourable by Observation~\ref{obs col}.

We note that all graphs of $\mathcal{F}^0$ are $(3K_1, 2K_2, K_1 \cup K_3)$-free
(and thus $\claw$-free).
Furthermore, it is not hard to see that all graphs of $\mathcal{F}^1$ and all graphs of $\mathcal{F}^3$ are
$(\claw, K_4, C_4, C_5)$-free and
$(\claw, D)$-free, respectively.
Hence, if $Y$ does not belong to the class of $(3K_1, 2K_2, K_1\cup K_3,$ $K_4, D, C_4, C_5)$-free graphs,
then there are infinitely many connected $(K_{1,3},Y)$-free graphs that are distinct from an odd cycle but not $\omega$-colourable.

Consequently, we can assume that $Y$ is
$(3K_1, 2K_2, K_1 \cup K_3, K_4, D, C_4, C_5)$-free.
We show that $Y$ is an induced subgraph of $P_4$ or of $Z_1$.
Since $Y$ is $(3K_1,C_4,C_5)$-free, it is $(K_{1,3}, C_4, C_5, C_6, \ldots)$-free as well. 
In particular, if $Y$ is $K_3$-free, then every component of $Y$ is a path,
and thus $Y$ is an induced subgraph of $P_4$ since $Y$ is $(3K_1, 2K_2)$-free.

We assume that $Y$ contains a set, say $T$, of vertices that induces a complete subgraph of order~$3$.
Since $Y$ is $(K_1 \cup K_3, K_4, D)$-free, every vertex outside $T$ has precisely one neighbour in $T$.
Furthermore, every edge of $Y$ is incident with a vertex of $T$ since $Y$ is $(2K_2, C_4)$-free.
Consequently, we note that $Y$ has at most one vertex outside $T$ since $Y$ is $3K_1$-free.
Thus, $Y$ is an induced subgraph of $Z_1$.

%%%%%%%%%%%%%%%%%%%%%%%%%%%%%%%%%%%%%%%

In order to show the ``furthermore part" of the statement,
we consider the families of graphs $\mf^1$, $\mf^2$, $\mf^3$ and $\mf^4$.
We note that the families consist of graphs which all have independence number at least $3$ 
(for graphs of $\mf^1$ and $\mf^3$, we take $\{u_1,u_3,u_5\}$ as an independent set;
for graphs of $\mf^2$, we take $\{u_1,u_3,x_5\}$ as an independent set;
and for $\mf^4$, we take $\{x_1,x_2,x_3\}$ as an independent set.).

Furthermore, we observe that all graphs of $\mf^1$, all graphs of $\mf^2$, and all graphs of $\mf^3$, 
are $(\claw, B, K_4, C_4, C_5, C_6)$-free,
$(\claw, H)$-free, and
$(\claw, D)$-free, respectively.
In addition, all graphs of $\mf^4$ are $(\claw, 4K_1, 2K_1 \cup K_2, K_2 \cup K_3)$-free since their complements are $(K_1 \cup K_3, K_4, D, K_{2,3})$-free.

Similarly as above, we assume that $Y$ is
$(\claw, 4K_1, 2K_1 \cup K_2, K_2 \cup K_3, B, K_4, D, H$, $C_4, C_5, C_6)$-free,
and we show that $Y$ is an induced subgraph of $P_5$ or of $Z_2$.
We note that
$Y$ is $(K_{1,3}, C_4, C_5, C_6, C_7, \ldots)$-free
since it is $(K_{1,3}, C_4, C_5, C_6, 2K_1 \cup K_2)$-free. 
In particular, if~$Y$ is $K_3$-free, then every component of $Y$ is a path,
and $Y$ is an induced subgraph of $P_5$ since $Y$ is $(4K_1, 2K_1 \cup K_2)$-free.

We assume that there is a set, say $T$, of vertices that induces a complete subgraph of order~$3$.
We note that every vertex outside $T$ has at most one neighbour in $T$ since $Y$ is $(K_4, D)$-free.
Furthermore, at most one vertex outside $T$ is adjacent to a vertex of $T$ since $Y$ is $(2K_1 \cup K_2, B, H, C_4)$-free,
and at most one vertex outside $T$ has no neighbour in $T$ since $Y$ is $(2K_1 \cup K_2, K_2 \cup K_3)$-free.
In~case $Y$ has two vertices outside $T$,
we note that these two vertices are adjacent since $Y$ is $2K_1 \cup K_2$-free.
Thus, we conclude that $Y$ is an induced subgraph of $Z_2$. 
\end{proof}

\section{Proving the main results}\label{s3}
In this section, we apply Theorem~\ref{SPGT} and Lemma~\ref{families}
and prove our main results, Theorem~\ref{noAlpha} and Theorem~\ref{alpha3}.
For proving Theorem~\ref{noAlpha}, we shall also use the following:

\begin{theorem}[Olariu~\cite{O}] \label{theorem olariu}
If $G$ is a connected $Z_1$-free graph, then $G$ is $K_3$-free or complete multipartite.
\end{theorem}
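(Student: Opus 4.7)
The plan is to reduce the theorem to a statement about induced $K_1\cup K_2$'s. A graph is complete multipartite if and only if it is $(K_1\cup K_2)$-free (equivalently, its complement is a disjoint union of cliques, so that non-adjacency is an equivalence relation). Hence it suffices to show that a connected paw-free graph $G$ containing a triangle has no induced $K_1\cup K_2$. I fix a triangle $T=\{a,b,c\}$ and first observe that paw-freeness forces every vertex $v\notin T$ to be adjacent to exactly $0$, $2$, or $3$ vertices of $T$, since a single neighbour in $T$ immediately produces a paw on $\{v\}\cup T$.

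The next step is to rule out that any vertex has $0$ neighbours in $T$, using connectedness. Suppose some $v\notin T$ has no neighbour in $T$; take a shortest path from $v$ to $a$ and let $w$ be the first vertex on this path with a neighbour in $T$. By the preceding observation, $w$ is adjacent to at least $2$ vertices of $T$, say $a$ and $b$, so $\{a,b,w\}$ is a triangle. The predecessor $w'$ of $w$ on the path satisfies $w'w\in E(G)$ but, by the minimality of the path together with the choice of $w$, has no neighbour in $T$; hence $\{w',w,a,b\}$ induces a paw, a contradiction. Consequently, every vertex of $G$ lies in $T$ or is adjacent to at least $2$ vertices of $T$.

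For the main step, assume toward a contradiction that $G$ contains an induced $K_1\cup K_2$ on vertices $\{x,y,z\}$ with $yz\in E(G)$ and $x$ non-adjacent to both $y$ and $z$. By the previous step, each of $y$ and $z$ has at least two neighbours in $\{a,b,c\}$, and pigeonhole gives a common neighbour, say $a$; then $\{a,y,z\}$ is a triangle. Applying paw-freeness to this triangle and $x$ forces $xa\notin E(G)$ (otherwise $x$ is adjacent to exactly one vertex of $\{a,y,z\}$), so $x$ must be adjacent to both $b$ and $c$ in order to have the required $\geq 2$ neighbours in $T$. Finally, $y$ is adjacent to $a$ and to at least one of $b,c$, say $b$, so $\{a,b,y\}$ is a triangle in which $x$ is adjacent only to $b$, yielding the forbidden induced paw on $\{x,a,b,y\}$.

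The step I expect to be the most delicate is the connectivity argument in the second paragraph: one has to ensure that the shortest-path choice really produces a pendant vertex attached to a triangle by exactly one edge. This is essentially the only use of connectedness; once it is settled, the remainder reduces to the short pigeonhole plus case analysis above.
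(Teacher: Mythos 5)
Your proof is correct. Note, though, that the paper does not prove this statement at all: it is quoted as a known result of Olariu with a citation, so there is no in-paper argument to compare against. Your write-up is a clean, self-contained proof along the standard lines: reduce ``complete multipartite'' to $(K_1\cup K_2)$-freeness (equivalently, $\overline{G}$ being $P_3$-free), show via the pendant-vertex observation and connectedness that every vertex outside a fixed triangle $T$ has at least two neighbours in $T$, and then derive a paw from any induced $K_1\cup K_2$. Two small points you should make explicit if you write this out in full: in the path argument, the first vertex $w$ with a neighbour in $T$ automatically lies outside $T$ (otherwise its predecessor would already have a neighbour in $T$, contradicting the choice of $w$), which is what licenses applying the ``$0$, $2$, or $3$ neighbours'' observation to $w$; and in the final step the vertices $x,y,z$ could a priori lie in $T$, but the degenerate cases are harmless since each vertex of $T$ also has two neighbours in $T$ and adjacency/non-adjacency to $y$ rules out the problematic coincidences. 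Neither is a gap, just bookkeeping worth recording.
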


\begin{proof}[Proof of Theorem~\ref{noAlpha}]
By definition, every perfect graph is $\omega$-colourable.
So, $(1)$ implies $(2)$.

We note that the ``furthermore part" of the statement follows by Lemma~\ref{families}.
In particular, we deduce from this lemma that $(2)$ implies $(3)$.

We show that $(3)$ implies $(1)$.
Clearly, if $Y$ is an induced subgraph of $P_4$,
then the implication follows from Theorem~\ref{SPGT} since every $P_4$-free graph contain neither an odd hole
nor an odd antihole.
If $Y$ is an induced subgraph of $Z_1$, then, by Theorem~\ref{theorem olariu}, every $Z_1$-free graph is $K_3$-free or complete multipartite.
We note that all complete multipartite graphs are perfect. Furthermore, every $(K_{1,3},K_3)$-free graph has maximum degree at most $2$. 
Thus, we conclude that every $K_3$-free graph of $\mathcal{G}$ is perfect as well since it is connected, distinct from an odd cycle, and has maximum degree at most $2$.
\end{proof}

For further reference, we note the following:

\begin{observ}
\label{N(x)capC}
Let $G$ be a $\claw$-free graph and let $C$ be a set of its vertices
such that $C$ induces a cycle of length at least $5$.
If $x$ is a vertex that does not belong to $C$ but is adjacent to a vertex of $C$, then $N(x) \cap C$ induces $K_2$ or $P_3$ or $P_4$
or a $C_5$
or a $2K_2$.  
%there is a vertex $x$ such that $G$  
\end{observ}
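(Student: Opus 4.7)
The plan is to set $S = N(x) \cap C$ and carry out a short structural case analysis of the subgraph of $C$ induced by $S$, exploiting claw-freeness of $G$ at a handful of potential claw centres.

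First, I would show that every vertex $u_i \in S$ has a neighbour lying in $S$. Because $|C| \ge 5$, the cycle-neighbours $u_{i-1}$ and $u_{i+1}$ of $u_i$ are non-adjacent in $G$; together with $x$ they form three neighbours of $u_i$. If neither $u_{i-1}$ nor $u_{i+1}$ were in $S$, these three vertices would be pairwise non-adjacent and give a claw centred at $u_i$. Since $C$ is an induced cycle of $G$, the subgraph of $G$ induced by $S$ agrees with the subgraph of $C$ induced by $S$; it is therefore either a disjoint union of paths on at least two vertices each, or the whole cycle $C$ (when $S = V(C)$). In particular $|S| \ge 2$.

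Next, I would rule out the presence of five consecutive cycle-vertices inside $S$, except in the trivial case $|C|=5$ and $S = V(C)$. If $u_a, u_{a+1}, u_{a+2}, u_{a+3}, u_{a+4} \in S$ and $|C|\ge 6$, then the vertices $u_a, u_{a+2}, u_{a+4}$ lie at pairwise cycle-distance $2$ or at least $2$ (the last distance being $\min(4,|C|-4)\ge 2$), so they are pairwise non-adjacent and all adjacent to $x$ — a claw at $x$. The same triple excludes $S=V(C)$ for $|C|\ge 6$. Hence either $|C|=5$ and $S=V(C)$, in which case $S$ induces $C_5$ (one of the allowed graphs), or every component of $S$ in $C$ is a path on at most four vertices.

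Finally, I would dispose of the multi-component cases. Suppose $S$ has at least two components. If some component is a path $P$ on at least three vertices, its endpoints $u,v$ lie at cycle-distance $2$ or $3$, hence are non-adjacent; picking any $w$ from another component, the three vertices $u,v,w$ are pairwise non-adjacent neighbours of $x$, producing a claw at $x$. Thus every component of $S$ is a $K_2$; but three such components would again yield three pairwise non-adjacent neighbours of $x$, another claw. Consequently $S$ has at most two components, and if it has two, it induces $2K_2$. If $S$ is connected, it is a path on two, three or four vertices, giving $K_2$, $P_3$ or $P_4$. I do not expect any real obstacle: the argument is a clean case analysis in which each subcase is closed by exhibiting either a claw or one of the five listed induced subgraphs.
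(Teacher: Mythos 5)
Your proof is correct and uses exactly the two ingredients of the paper's (much terser) argument: no vertex of $N(x)\cap C$ is isolated in $N(x)\cap C$ (else a claw centred on that cycle vertex), and $N(x)$ contains no three pairwise non-adjacent vertices (else a claw centred at $x$). The paper leaves the resulting enumeration of admissible induced subgraphs of a cycle implicit, whereas you carry it out explicitly; the content is the same.
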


\begin{proof}
Since $G$ is $\claw$-free, $N(x)$ cannot contain three pairwise independent vertices.
Furthermore, the graph induced by $N(x) \cap C$ cannot contain $K_1$ as a component.
\end{proof}

In addition,
we shall also use the following two Lemmas on $\claw$-free graphs for proving Theorem~\ref{alpha3}.
The first one is due to Ben Rebea.

\begin{lemma}[Ben Rebea's Lemma~\cite{BR,Chv}\label{lemma Ben Rebea}]
Let $G$ be a connected $\claw$-free graph with independence number at least $3$.
If $G$ contains an odd antihole, then it contains a hole of order~$5$.
\end{lemma}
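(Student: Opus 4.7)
The plan is to take a shortest odd antihole $A$ in $G$ and show that $|A|$ must equal $5$, so that $A$ itself is an induced $C_5$. The case $|A|=5$ is immediate because $\overline{C_5}=C_5$. Suppose for contradiction $|A|=2k+1\geq 7$, and label $V(A)=\{v_1,\ldots,v_{2k+1}\}$ so that $v_iv_j$ is a non-edge of $G$ precisely when $|i-j|\equiv\pm 1\pmod{2k+1}$. Since $\alpha(G[V(A)])=2$ while $\alpha(G)\geq 3$, there is a vertex outside $V(A)$, and connectedness of $G$ lets me pick $w\notin V(A)$ with $N(w)\cap V(A)\neq\emptyset$. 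Set $S=\{i:v_i\in N(w)\}$ and $T=\{1,\ldots,2k+1\}\setminus S$.

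The central structural claim would be: \emph{if $i$ and $i+1$ both lie in $T$, then $S\subseteq\{i-1,i+2\}$}. Indeed, $\{w,v_i,v_{i+1}\}$ is then independent in $G$, and every $v_j$ with $j\notin\{i-1,i,i+1,i+2\}$ is adjacent to both $v_i$ and $v_{i+1}$; so if $v_j$ were also adjacent to $w$, then $v_j$ would be the centre of a claw, contradicting the claw-freeness of $G$.

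I would then split on $|S|$. If $|S|=1$, say $S=\{i\}$, then $\{w,v_{i+2},v_{i+3}\}\subseteq N(v_i)$ is an independent triple and gives a claw at $v_i$, impossible. If $|S|=2$ and the two neighbours are consecutive in $\overline{A}$, say $S=\{i,i+1\}$, a direct check shows that $\{v_{i-1},v_{i+1},w,v_i,v_{i+2}\}$ induces $C_5$ in $G$, and we are done. If $|S|=2$ with the two neighbours non-consecutive in $\overline{A}$, then $|T|=2k-1\geq 5$ forces two different consecutive pairs inside $T$; applying the central claim to each yields two subset-constraints on $S$ whose combination forces $6\equiv 0\pmod{2k+1}$, impossible for $2k+1\geq 7$.

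The main obstacle is the remaining case $|S|\geq 3$, where the contrapositive of the central claim only says that $T$ is independent in the underlying cycle $C_{2k+1}$. Here I would bring in $\alpha(G)\geq 3$ once more: any independent triple $\{a,b,c\}$ in $G$ meets $V(A)$ in at most two vertices, and if it meets $V(A)$ in a pair $v_i,v_{i+1}$, then the third vertex $c$ satisfies $i,i+1\in T(c)$; by the previous cases this either directly produces the desired $C_5$ or forces $N(c)\cap V(A)=\emptyset$. In the latter sub-case, a shortest path from $c$ back to $V(A)$ through external vertices must eventually cross an external vertex adjacent to $V(A)$, to which the case analysis may be re-applied. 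Running this cascade to completion, while keeping track of claw-freeness and the no-two-consecutive constraint on each $T(\cdot)$, is the delicate bookkeeping that I expect to consume most of the work.
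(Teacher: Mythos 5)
The paper does not prove this lemma at all: it is imported verbatim from Ben Rebea's thesis and from Chv\'atal--Sbihi, so there is no in-paper argument to compare yours against, and you should judge your attempt on its own. Your setup, the central claim (if $i,i+1\in T$ then $S\subseteq\{i-1,i+2\}$), and the cases $|S|=1$ and $|S|=2$ are correct; in particular the five vertices $v_{i+1},v_{i-1},v_{i+2},v_i,w$ really do induce a $C_5$ when $S=\{i,i+1\}$, and the two subset-constraints argument disposes of a non-consecutive pair.

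The genuine gap is the case $|S|\geq 3$, and it is not ``delicate bookkeeping'' but the heart of the lemma. Your central claim shows that $|S|\geq 3$ forces $T$ to be independent on the index cycle, hence $|S|\geq k+1$; so this is the \emph{generic} behaviour of an attachment vertex, and nothing derived so far contradicts it. (For instance, $\overline{C_7}$ together with one extra vertex adjacent exactly to $v_1,v_3,v_5,v_7$ is claw-free and contains no induced $C_5$; it escapes the lemma only because its independence number is $2$. So the hypothesis $\alpha(G)\geq 3$ must be exploited in a serious way precisely here.) Your sketch for this case has two holes. First, an independent triple witnessing $\alpha(G)\geq 3$ need not meet $V(A)$ in a consecutive pair $v_i,v_{i+1}$ --- it may contain one or zero vertices of the antihole --- and then your sub-argument never gets off the ground. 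Second, even when it does, you only conclude that the third vertex $c$ has no neighbour on $A$, and ``re-applying the case analysis'' to the attachment vertex of a path from $c$ to $A$ simply returns you to the unresolved case $|S|\geq 3$; the proposed cascade has no termination or progress argument. As written, the proof does not establish the statement; closing this case is exactly where the published proofs of Ben Rebea's lemma invest their real work.
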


Let $\Theta$ be the graph obtained from $C_6$ by adding one edge between two vertices of distance $2$ in $C_6$ (see Fig.\,\ref{figure forbsubgr}).

\begin{lemma}\label{C_5}
Every connected $(K_{1,3}, \Theta )$-free graph with independence number at least $3$
is $C_5$-free.
\end{lemma}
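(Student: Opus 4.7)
The plan is to assume for contradiction that $G$ contains an induced $C_5$, say $C=u_1u_2u_3u_4u_5u_1$, and to extract a contradiction from the existence of an independent triple. By Observation~\ref{N(x)capC}, for every $x\in V(G)\setminus C$ with a neighbour in $C$, the set $N(x)\cap C$ induces one of $K_2,P_3,P_4,C_5,2K_2$. The $2K_2$ case is impossible inside a $C_5$, and the $K_2$ case (two consecutive vertices of $C$) together with $x$ and the remaining three vertices of $C$ induces a copy of $\Theta$, contradicting $\Theta$-freeness. Hence every such $x$ is adjacent to $3$, $4$, or $5$ consecutive vertices of $C$; I shall call these the \emph{types} $P_3$, $P_4$, $C_5$. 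Moreover, every vertex of $G$ lies in $C$ or is adjacent to $C$: otherwise a shortest path from a far vertex yields $w$ adjacent to $C$ and a predecessor $w'$ non-adjacent to $C$, and two non-adjacent members among the (at least three) consecutive $C$-neighbours of $w$ together with $w'$ form an induced $K_{1,3}$ centred at $w$.

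Next I fix an independent triple $I=\{a_1,a_2,a_3\}\subseteq V(G)$ and split on $|I\cap C|$. The case $|I\cap C|=2$ is immediate: the two members of $I$ inside $C$ lie at cyclic distance two on $C$, while the third member (which must lie outside $C$ and hence be adjacent to at least three consecutive vertices of $C$) cannot avoid both of them. For $|I\cap C|=1$, I may assume $a_1=u_1$; then the types of $a_2$ and $a_3$ lie in $\{\{u_2,u_3,u_4\},\{u_3,u_4,u_5\},\{u_2,u_3,u_4,u_5\}\}$, giving six unordered pairs of types. A direct check shows that five of them yield an induced $K_{1,3}$ whose leaves are $a_2,a_3$ together with a vertex $u_j\in C\setminus(N(a_2)\cup N(a_3))$, and the remaining pair $(\{u_2,u_3,u_4\},\{u_3,u_4,u_5\})$ yields an induced $\Theta$ on $\{u_5,u_1,u_2,a_2,u_3,a_3\}$, realised by the $6$-cycle $u_5u_1u_2a_2u_3a_3$ together with the chord $u_2u_3$ at cyclic distance two.

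The heart of the proof is $|I\cap C|=0$. Here I first observe that no vertex of $C$ can lie in $N(a_1)\cap N(a_2)\cap N(a_3)$, since such a vertex would be the centre of an induced $K_{1,3}$ with leaves $a_1,a_2,a_3$. Setting $M_v:=C\setminus N(v)$, the emptiness of this triple intersection forces $\bigcup_iM_{a_i}=C$ and $|M_{a_i}\cup M_{a_j}|\geq 3$ for every pair; combined with $|M_v|\leq 2$, a short combinatorial analysis leaves only two configurations. In the first, all three $a_i$ are of type $P_3$ with three distinct miss-sets whose union is $C$, which forces exactly one pair of miss-sets to overlap in a single vertex; the corresponding two $P_3$-types are related by a shift of one around $C$ and yield an induced $\Theta$ by the same construction as in the previous paragraph. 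In the second, two of the $a_i$ are of type $P_3$ and the third is of type $P_4$, with pairwise disjoint miss-sets covering $C$; the two $P_3$-types are then related by a shift of two around $C$, and one constructs an induced $5$-cycle $C'$ of $G$ containing both of them at cyclic distance two. Applying Observation~\ref{N(x)capC} to the third vertex $a_k$ (of type $P_4$) with respect to $C'$, and using that $a_k$ is non-adjacent to the two vertices of $I$ lying in $C'$, I conclude that $N(a_k)\cap C'$ is either empty---forcing $a_k$ to have at most two neighbours in $C$, contrary to its type---or the unique pair of consecutive vertices of $C'$ that avoids those two, in which case $C'\cup\{a_k\}$ induces a copy of $\Theta$. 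The main obstacle is this last manoeuvre: recognising that the ``survivor'' shift-$2$ pair of $P_3$-types produces an auxiliary induced $5$-cycle $C'$ on which Observation~\ref{N(x)capC} can be re-applied; the remaining verification is combinatorial bookkeeping on miss-sets and rotations of $C$.
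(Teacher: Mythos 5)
Your proposal is correct and follows essentially the same route as the paper's proof: classify the attachment of outside vertices to the $C_5$ via Observation~\ref{N(x)capC} (ruling out $K_2$ by $\Theta$-freeness), show every vertex meets $C$, and then enumerate how an independent triple can attach to $C$, finding a claw or a $\Theta$ in every configuration. The paper delegates the final enumeration to its Fig.~3, whose three pictures are exactly your three surviving configurations (the shift-by-one pair with one vertex of $I$ on $C$, the three overlapping $P_3$-types, and the two disjoint $P_3$-types plus a $P_4$-type), so your explicit miss-set bookkeeping is just a written-out version of the same argument.
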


\begin{proof}
We let $G$ be a connected $(K_{1,3}, \Theta )$-free graph with independence number at least $3$.
For the sake of a contradiction, we suppose that $G$ contains a set $C$ of vertices inducing a cycle of length $5$. Since $G$ is connected and contains an independent set of size at least $3$, there exists a vertex which does not belong to $C$ but is adjacent to a vertex of $C$. Let $x$ be an arbitrary one.
We note that the graph induced by $N(x) \cap C$ is distinct from $K_2$ since $G$ is $\Theta$-free,
and thus, by Observation~\ref{N(x)capC}, it is either $P_3$ or $P_4$ or $C_5$.
In particular, $x$ has two non-adjacent neighbours in $C$,
and therefore a vertex which has no neighbour in $C$
cannot be adjacent to $x$ 
since $G$ is $K_{1,3}$-free.
Since $G$ is connected,
it follows from the arbitrariness of $x$ that every vertex of $G$ is adjacent to a vertex of $C$.

We let $I$ be a set of $3$ independent vertices of $G$.
We consider the graph induced by $C \cup I$,
and note that no vertex of $C$ is adjacent to all vertices of $I$ since $G$ is $\claw$-free.
Thus, there are only three options of interconnecting $C$ and $I$
(see Fig.\,\ref{fig-CcupI}).
%
%
%
%
% Figure 4
%
\begin{figure}[ht]
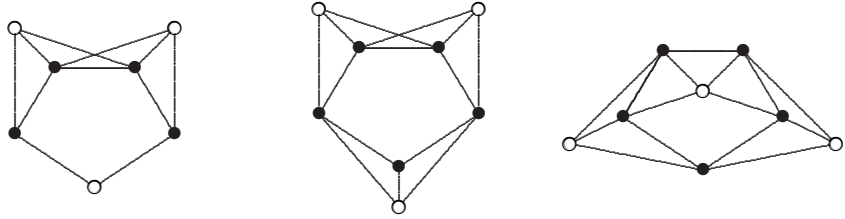

$$\beginpicture
\setcoordinatesystem units <0.8mm,0.8mm>
\setplotarea x from -50 to 50, y from -2 to 2
\put{
\beginpicture
\setcoordinatesystem units <0.35mm,0.35mm>
\setplotarea x from -5 to 5, y from  0 to 20
\put{$\bullet$} at   -30   30
\put{$\bullet$} at    30   30
\put{$\bullet$} at   -15   15
\put{$\bullet$} at    15   15    
\put{$\bullet$} at  -30   -10
\put{$\bullet$} at   30   -10
\put{$\bullet$} at   0	  -30
\plot -30 -10  -30 30  -15 15 15 15 30 30 30 -10 0 -30 -30 -10 -15 15 15 15 -30 30 /
\plot -15 15  30 30 /
\plot  15 15  30 -10 /
\put{$\wv$} at   -30    30
\put{$\wv$} at   30    30
\put{$\wv$} at   0    -30
\endpicture}  at  -50  0
\put{
\beginpicture
\setcoordinatesystem units <0.35mm,0.35mm>
\setplotarea x from -5 to 5, y from  0 to 20
\put{$\bullet$} at   -30   30
\put{$\bullet$} at    30   30
\put{$\bullet$} at   -15   15
\put{$\bullet$} at    15   15    
\put{$\bullet$} at  -30   -10
\put{$\bullet$} at   30   -10
\put{$\bullet$} at   0	  -30
\put{$\bullet$} at   0	  -45
\plot -30 -10  -30 30  -15 15 15 15 30 30 30 -10 0 -30 -30 -10 -15 15 15 15 -30 30 /
\plot -15 15  30 30 /
\plot  15 15  30 -10 /
\plot  30 -10 0 -45  -30 -10 /
\plot  0 -30  0 -45 /
\put{$\wv$} at   -30    30
\put{$\wv$} at   30    30
\put{$\wv$} at   0    -45
\endpicture}  at    0  0
\put{
\beginpicture
\setcoordinatesystem units <0.35mm,0.35mm>
\setplotarea x from -5 to 5, y from  0 to 20
\put{$\bullet$} at   -50   -20
\put{$\bullet$} at    50   -20
\put{$\bullet$} at   -15   15
\put{$\bullet$} at    15   15    
\put{$\bullet$} at  -30   -10
\put{$\bullet$} at   30   -10
\put{$\bullet$} at   0	  -30
\put{$\bullet$} at   0	   0
\plot -30 -10  -15 15 15 15 30 -10 0 -30 -30 -10 -15 15 15 15 /
\plot -15 15  -50 -20 0 -30 50 -20 15 15 /
\plot  -50 -20 -30 -10 0 0 30 -10 50 -20 /
\plot -15 15 0 0 15 15 /
\put{$\wv$} at   0    0
\put{$\wv$} at   -50    -20
\put{$\wv$} at   50    -20
\endpicture}  at   50  0
\endpicture$$
\caption{The only possible realisations of the graph induced by $C \cup I$, where the white vertices depict the vertices of $I$.}
\label{fig-CcupI}
\end{figure}
We conclude that the graph induced by $C \cup I$ is not $\Theta$-free,
a contradiction.
\end{proof}

We combine the ingredients and finally prove Theorem~\ref{alpha3}.

\begin{proof}[Proof of Theorem~\ref{alpha3}]
Similarly as in the proof of Theorem~\ref{noAlpha},
we note that $(1)$ implies $(2)$, and $(2)$ implies $(3)$,
and that the ``furthermore part" is satisfied.

We show that $(3)$ implies $(1)$.
We consider a graph $G$ of $\mathcal{G}_3$, and proceed in two steps.

First of all, we show that $G$ is $(C_7, C_9, \dots )$-free.
Clearly, the claim is satisfied in case $Y$ is an induced subgraph of $P_5$.
We assume that $Y$ is an induced subgraph of $Z_2$.
For the sake of a contradiction, we suppose that $G$ contains a set $C$ of vertices inducing an
odd cycle of length at least $7$.
Since $G$ is connected and distinct from an odd cycle,
there is a vertex $x$ that does not belong to $C$ but is adjacent to a vertex of $C$.
By Observation~\ref{N(x)capC}, the graph induced by $N(x) \cap C$ is either 
$K_2$ or $P_3$ or $P_4$ or $2K_2$.
In all cases, the graph induced by $C \cup \{ x \}$ is not $Z_2$-free,
a contradiction.

For our second step, we note that $\Theta$ is neither $P_5$-free nor $Z_2$-free. Hence $G$ 
is $(\claw, \Theta)$-free,
and thus $G$ is $C_5$-free by Lemma~\ref{C_5}.
In particular, $G$ contains no odd antihole by Lemma~\ref{lemma Ben Rebea}, which implies that $G$ is perfect by Theorem~\ref{SPGT}.
\end{proof}

\section{Characterising exceptional graphs}\label{s4}

For every graph $Y$, we considered connected $(\claw, Y)$-free graphs that are distinct from an odd cycle but which have an independence number of at least $3$, and we showed that either all such graphs
are perfect or there are infinitely many such graphs which are not $\omega$-colourable.
We note that for some graphs $Y$, not $\omega$-colourable $(\claw, Y)$-free graphs that are distinct from an odd cycle but which have an independence number of at least $3$ could be described precisely.

In this section, we study $(\claw, B)$-free graphs but need the terminology of an inflation of a cycle. For $k\geq 4$ and $n_1,n_2,\ldots,n_k\geq 1$,
we let $C[n_1,n_2,\ldots,n_k]$ denote the graph whose vertex set can be partitioned into
$k$ sets $W_1,W_2,\ldots,W_k$
of sizes $n_1,n_2,\ldots,n_k$, respectively,
such that
\begin{itemize}
\item[$\bullet$]
$W_i\cup W_j$ induces a complete graph (of order $n_i+n_j$)
if either $|i - j| = 1$ or $|i - j| = k - 1$,
\item[$\bullet$]
 $W_i\cup W_j$ induces a disjoint union of two complete graphs if $1<|i - j|<k-1$
\end{itemize} 
for all distinct $i,j\in [k]$.
For brevity, we say that the graph $C[n_1,n_2,\ldots,n_k]$ is an {\it inflation of the cycle} $C_k$.
Two examples are depicted in Fig.\,\ref{fig2-class_C}.

\begin{figure}[ht]
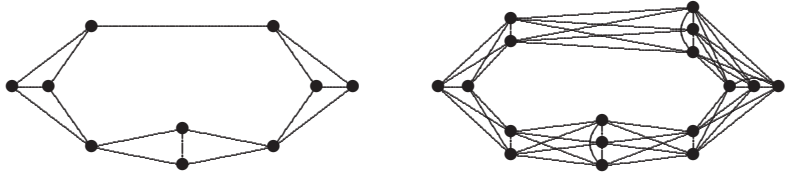

$$\beginpicture
\setcoordinatesystem units <0.8mm,1.0mm>
\setplotarea x from -60 to 60, y from -10 to 10
\put
 {\beginpicture
 \setcoordinatesystem units <0.8mm,0.8mm>
 \setplotarea x from -20 to 20, y from -15 to 15
 \put{$\bullet$} at  -15    10
 \put{$\bullet$} at   15    10  
 \put{$\bullet$} at   22     0  
 \put{$\bullet$} at   28     0  
 \put{$\bullet$} at   15   -10 
 \put{$\bullet$} at    0    -7
 \put{$\bullet$} at    0   -13
 \put{$\bullet$} at  -15   -10
 \put{$\bullet$} at  -22     0    
 \put{$\bullet$} at  -28     0    
 \plot -15 10  15 10  28 0  15 -10  0 -13  -15 -10 -28 0  -15 10 /
 \plot  15 10  22 0  15 -10  0 -7  -15 -10 -22 0  -15 10 /
 \plot -28 0  -22  0 /
 \plot  28 0   22  0 /
 \plot 0 -7  0 -13 /
\endpicture} at  -35  0
\put
 {\beginpicture
 \setcoordinatesystem units <0.8mm,0.75mm>
 \setplotarea x from -20 to 20, y from -15 to 15
 \put{$\bullet$} at  -15     8
 \put{$\bullet$} at  -15    12
 \put{$\bullet$} at   15     6
 \put{$\bullet$} at   15    10
 \put{$\bullet$} at   15    14
 \put{$\bullet$} at   21     0  
 \put{$\bullet$} at   25     0  
 \put{$\bullet$} at   29     0  
 \put{$\bullet$} at   15    -8 
 \put{$\bullet$} at   15   -12
 \put{$\bullet$} at    0    -6
 \put{$\bullet$} at    0   -10
 \put{$\bullet$} at    0   -14
 \put{$\bullet$} at  -15    -8
 \put{$\bullet$} at  -15   -12
 \put{$\bullet$} at  -22     0    
 \put{$\bullet$} at  -27     0 
 \plot -27 0  -15 12  -22 0  -15 8  -27 0  -22 0 /  
 \plot -27 0  -15 -12  -22 0  -15 -8  -27 0 /  
 \plot -15 -12  0 -6  -15 -8  0 -10 -15 -12  0 -14  -15 -8 /
 \plot  15 -12  0 -6   15 -8  0 -10  15 -12  0 -14   15 -8 /
 \plot 15 -12  21 0  15 -8  25 0  15 -12  29 0  15 -8 /
 \plot 15 10  21 0  15 6  25 0  15 10  29 0  15 14  25 0 /
 \plot 21 0  15 14 /
 \plot 29 0  15 6 /
 \plot -15 8  15 14  -15 12  15 10  -15 8  15 6  -15 12 /
 \plot -15 8  -15 12 /
 \plot  15 -8  15 -12 /
 \plot  15 6   15 14 /
 \plot -15 -8  -15 -12 / 
 \plot 0 -6  0 -14 /
 \plot 21 0  29 0 /
 \setquadratic
 \plot 0 -6  -2 -10  0 -14 /
 \plot 21 0  25 1.5  29 0 /
 \plot 15 6  13 10  15 14 /
\endpicture} at   35  0
\endpicture$$
%
%\vspace*{-5mm}
%
\caption{Two examples of an inflation of $C_7$.}
\label{fig2-class_C}
\end{figure}

Our main result of this section characterises connected imperfect $(\claw, B)$-free graphs with independence number at least $3$.

\begin{theorem}\label{perfect_bull}
Every connected $(\claw, B)$-free graph with independence number at least $3$
is either perfect or an inflation of an odd cycle of length at least $7$.
\end{theorem}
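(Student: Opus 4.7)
The plan is to show that if $G$ is imperfect, then $G$ is an inflation of an odd cycle of length at least $7$. First I reduce to the existence of a long odd hole, and then I extract the inflation structure from it.

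For the reduction, I observe that $B$ is an induced subgraph of $\Theta$ (deleting an appropriate vertex of $\Theta$ leaves a bull), so $G$ is $\Theta$-free. By Lemma~\ref{C_5}, $G$ is therefore $C_5$-free, and by Lemma~\ref{lemma Ben Rebea}, $G$ has no odd antihole either, since any odd antihole in our connected $\claw$-free graph with $\alpha(G)\geq 3$ would force an induced $C_5$. The Strong Perfect Graph Theorem, applied to the imperfect $G$, then yields an induced odd hole $C\colon u_1u_2\dots u_{2k+1}u_1$ with $2k+1\geq 7$.

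The next step is to analyze $N(v)\cap C$ for every $v\notin C$ with a neighbor on $C$. By Observation~\ref{N(x)capC} this set induces $K_2$, $P_3$, $P_4$, $2K_2$, or $C_5$, and $C_5$ is excluded by $|C|\geq 7$. Each remaining non-$P_3$ case yields an induced bull; for instance, if $N(v)\cap C=\{u_1,u_2\}$ then $\{v,u_{2k+1},u_1,u_2,u_3\}$ is a bull with triangle $\{v,u_1,u_2\}$ and horns $u_{2k+1},u_3$, and the analogous subset $\{v,u_{2k+1},u_1,u_2,u_4\}$ rules out both the $P_4$ and the $2K_2$ cases. Hence $N(v)\cap C=\{u_{i-1},u_i,u_{i+1}\}$ for some index $i$, and I set $W_i=\{u_i\}\cup\{v\notin C:N(v)\cap C=\{u_{i-1},u_i,u_{i+1}\}\}$. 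A routine claw argument---if $y\notin C$ has no neighbor on $C$ but is adjacent to some $v$ with $N(v)\cap C=\{u_{i-1},u_i,u_{i+1}\}$, then $\{v,y,u_{i-1},u_{i+1}\}$ is a $\claw$ at $v$---shows the $W_i$ partition $V(G)$.

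The main obstacle is to verify the three adjacency patterns characterizing an inflation of $C_{2k+1}$: each $W_i$ is a clique, consecutive classes $W_i,W_{i+1}$ are completely joined, and non-consecutive classes $W_i,W_j$ (with $2\leq|i-j|\leq 2k-1$ modulo $2k+1$) are anti-complete. Each of these is designed to follow from a claw or a bull. I expect: (a) non-adjacent $v,v'\in W_i\setminus\{u_i\}$ yield a $\claw$ at $u_{i-1}$ with leaves $u_{i-2},v,v'$; (b) non-adjacent $v\in W_i,w\in W_{i+1}$ with $v,w\notin C$ yield a bull on $\{v,u_{i+1},w,u_{i+2},u_{i+3}\}$ with triangle $\{u_{i+1},w,u_{i+2}\}$ and horns $v,u_{i+3}$; (c) an adjacent pair $v\in W_i$, $w\in W_j$ with $|i-j|\geq 3$ yields a $\claw$ at $v$ with leaves $u_{i-1},u_{i+1},w$; and (d) an adjacent pair $v\in W_i,w\in W_{i+2}$ with $v,w\notin C$ yields a bull on $\{u_{i-1},v,u_{i+1},w,u_{i+3}\}$ with triangle $\{v,w,u_{i+1}\}$ and horns $u_{i-1},u_{i+3}$. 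The boundary subcases where $v$ or $w$ coincides with a cycle representative reduce to adjacencies already recorded on $C$ or to non-adjacencies forced by the definition of the $W_i$, and require only a brief inspection. Once (a)--(d) are established, $G=C[\,|W_1|,\dots,|W_{2k+1}|\,]$ is an inflation of $C_{2k+1}$, and the proof is complete.
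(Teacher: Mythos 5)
Your proposal is correct and follows essentially the same route as the paper: reduce via $\Theta$-freeness, Lemma~\ref{C_5}, Ben Rebea's Lemma and the Strong Perfect Graph Theorem to an odd hole of length at least $7$, show each outside vertex meets the hole in a consecutive $P_3$ (the other options of Observation~\ref{N(x)capC} giving a bull), and verify the inflation structure by the same claw/bull case analysis on the number of common cycle-neighbours. The paper merely packages the second half as a separate lemma (for induced cycles of any length $k\geq 6$); the content of the argument is identical.
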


We note that Theorem~\ref{perfect_bull} follows
from the combination of Theorem~\ref{SPGT} and Lemma~\ref{lemma Ben Rebea}
and the following lemma.

\begin{lemma}\label{lemma_characterisation_bull}
Let $G$ be a connected $(\claw,B)$-free graph.
\begin{itemize}
\item[$(1)$] If $G$ contains an independent set of size $3$, then $G$ is $C_5$-free.
\item[$(2)$] If $G$ contains an induced cycle of length $k$ with $k\geq 6$, then
     $G$ is an inflation of $C_k$.
\end{itemize} 
\end{lemma}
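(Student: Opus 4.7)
The plan is to prove (1) and (2) separately. Part (1) is a quick reduction to Lemma~\ref{C_5}; part (2) is a structural argument that explicitly exhibits the inflation partition.

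For (1), I would first verify that the bull $B$ is an induced subgraph of the $5$-cap $\Theta$: writing $\Theta$ as the cycle $c_1c_2c_3c_4c_5c_1$ together with a vertex $z$ adjacent to the consecutive pair $\{c_1,c_2\}$, the set $\{c_5,c_1,z,c_2,c_3\}$ induces a bull (triangle $\{c_1,z,c_2\}$, pendants $c_5$ at $c_1$ and $c_3$ at $c_2$). Hence every $B$-free graph is $\Theta$-free, and since $G$ is connected with independence number at least~$3$, Lemma~\ref{C_5} yields that $G$ is $C_5$-free.

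For (2), fix an induced cycle $C\colon c_1c_2\dots c_kc_1$ with $k\geq 6$, and for each $i\in[k]$ set
\[
W_i \;:=\; \{c_i\}\cup\bigl\{x\in V(G)\setminus V(C):\, N(x)\cap V(C)=\{c_{i-1},c_i,c_{i+1}\}\bigr\}.
\]
My aim is to show that $V(G)=W_1\cup\cdots\cup W_k$ is a partition witnessing $G$ as an inflation of $C_k$. Two preliminary facts would be established first. Fact~1: for every $x\notin V(C)$ with a neighbour in $V(C)$, $N(x)\cap V(C)=\{c_{i-1},c_i,c_{i+1}\}$ for some~$i$. By Observation~\ref{N(x)capC}, the induced subgraph on $N(x)\cap V(C)$ is one of $K_2$, $P_3$, $P_4$, $C_5$ or $2K_2$; the $C_5$ option is excluded since $k\geq 6$, the $P_3$ option in an induced cycle forces three consecutive cycle vertices (giving Fact~1 directly), and each of $K_2$, $P_4$, $2K_2$ is ruled out by displaying an induced bull on $x$ together with four cycle vertices. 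For concreteness, if $N(x)\cap V(C)=\{c_j,c_{j+1}\}$ is a $K_2$, then $\{c_{j-1},c_j,x,c_{j+1},c_{j+2}\}$ is a bull with triangle $\{c_j,c_{j+1},x\}$; if $N(x)\cap V(C)=\{c_{j-1},c_j,c_{j+1},c_{j+2}\}$ is a $P_4$, then $\{c_{j-2},c_{j-1},c_j,x,c_{j+2}\}$ is a bull; and a similar bull combining one vertex of each of the two $K_2$'s with the cycle vertex preceding one of them handles the $2K_2$ case. Fact~2: every vertex of $G$ lies in $V(C)$ or has a neighbour in $V(C)$. Otherwise, since $G$ is connected, there exists a vertex $z\notin V(C)$ that has both a neighbour in $V(C)$ and a neighbour $w'$ satisfying $N(w')\cap V(C)=\emptyset$; then Fact~1 gives $N(z)\cap V(C)=\{c_{j-1},c_j,c_{j+1}\}$, so $\{z,w',c_{j-1},c_{j+1}\}$ is an induced claw at~$z$, a contradiction.

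To finish, I would verify the three inflation conditions. First, $W_i$ is a clique: if $x,y\in W_i$ were non-adjacent, then $\{c_{i-1},c_{i-2},x,y\}$ would be a claw at $c_{i-1}$ (using $c_{i-2}\notin N(x)\cup N(y)$). Second, $W_i\cup W_{i+1}$ is a clique: if $x\in W_i$ and $y\in W_{i+1}$ were non-adjacent, then $\{c_{i-2},c_{i-1},c_i,x,y\}$ would be an induced bull, with triangle $\{c_{i-1},c_i,x\}$ and pendants $c_{i-2}$ at $c_{i-1}$ and $y$ at $c_i$. Third, there are no edges between $W_i$ and $W_j$ when the cyclic distance between $i$ and $j$ is at least~$2$: for $x\in W_i$ and $y\in W_j$ with $xy\in E(G)$, if this cyclic distance is at least~$3$ then $\{x,y,c_{i-1},c_{i+1}\}$ is a claw at~$x$, while if it equals~$2$ (say $j=i+2$) then $\{c_{i-1},x,c_{i+1},y,c_{i+3}\}$ is a bull with triangle $\{x,y,c_{i+1}\}$ and pendants $c_{i-1}$ at $x$ and $c_{i+3}$ at $y$.

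The hard part will be identifying the correct bull witness in the second inflation condition. Natural choices such as $\{c_{i-1},c_i,c_{i+1},x,y\}$ or $\{c_{i-1},c_i,x,c_{i+1},c_{i+2}\}$ induce a gem or a hammer rather than a bull, because a cycle edge such as $c_{i-1}c_i$ or $c_{i+1}c_{i+2}$ attaches the prospective pendant to two triangle vertices simultaneously. Stepping one position backwards to $c_{i-2}$ fixes this: $c_{i-2}$ then becomes a clean pendant at $c_{i-1}$ while $y$ plays the symmetric role at $c_i$. In every step it is necessary to check that the cyclic distances appearing in the witnesses really exceed~$1$; this is routine bookkeeping once $k\geq 6$, with the tight case $k=6$ requiring a bit of extra care.
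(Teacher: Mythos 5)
Your proposal is correct and follows essentially the same route as the paper: part (1) by observing that $B$ is an induced subgraph of $\Theta$ and invoking Lemma~\ref{C_5}, and part (2) by using Observation~\ref{N(x)capC} together with $B$-freeness to force $N(x)\cap C$ to be three consecutive cycle vertices, and then verifying the inflation structure with explicit claw/bull witnesses. Your case analysis by cyclic distance between the classes is just a reindexing of the paper's case analysis by the number $c$ of common neighbours on $C$, and your explicit bull witnesses (which all check out for $k\geq 6$) merely spell out what the paper leaves implicit.
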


\begin{proof}
We note that $G$ 
is $(\claw, \Theta)$-free since it is $(\claw, B)$-free,
and thus statement $(1)$ follows by Lemma~\ref{C_5}.

It remains to show $(2)$.
We let $C$ be a set of vertices inducing a cycle of length $k\geq 6$ in $G$.
Clearly, if $V(G) = C$, then the statement is trivially satisfied. 
Since $G$ is connected, we can assume that there is
a vertex which does not belong to $C$ but is adjacent to a vertex of $C$. Let $x$ be an arbitrary one.
We consider the graph induced by $N(x) \cap C$,
and note that this graph is distinct from $K_2$, $2K_2$, and $P_4$
since $G$ is $B$-free.
Thus, by Observation~\ref{N(x)capC}, the graph induced by $N(x) \cap C$ is a path of $3$ vertices, which implies that every vertex outside $C$ is adjacent to a vertex of $C$
since $G$ is connected but also $\claw$-free.
We let $C_k\colon v_1v_2\ldots v_kv_1$ be the cycle induced by $C$,
and divide $V(G)\setminus C$ into $k$ disjoint sets $M_1,M_2,\ldots ,M_k$ so that if
$w$ belongs to $M_i$, then $N(w) \cap C =\{v_i,v_{i+1},v_{i+2}\}$ for every $i\in [k]$.

Next, let us consider two vertices of $V(G)\setminus C$, say $w$ and $w'$.
We let $c := |N(w) \cap N(w') \cap C|$, that is, $c$ denotes the number of common neighbours of $w$ and $w'$ in $C$. We deduce the following:

\begin{itemize}
\item[$\bullet$] If $c=3$, then $w$ and $w'$ are adjacent
since $G$ is $\claw$-free.
\item[$\bullet$] If $c=2$, then $w$ and $w'$ are adjacent
since $G$ is $B$-free.

\item[$\bullet$] If $c=1$, then $w$ and $w'$ are non-adjacent
since $G$ is $B$-free.
\item[$\bullet$] If $c=0$, then $w$ and $w'$ are non-adjacent
since $G$ is $\claw$-free.
\end{itemize}
Consequently, we note that $G$ is an inflation of $C_k$. 
\end{proof}

\end{document}